\newtheorem{theorem}{Theorem}
\newtheorem{proposition}[theorem]{Proposition}
\newtheorem{corollary}[theorem]{Corollary}
\newtheorem{lemma}[theorem]{Lemma}
\newenvironment{proof}{\noindent \emph{Proof. }}{\hfill \hbox{\rlap{$\sqcap$}$\sqcup$}\\}
\title{Weak Colored Local Rules for Planar Tilings
\thanks{This work was supported by the ANR project QuasiCool (ANR-12-JS02-011-01)}}
\author{
Thomas Fernique
\footnote{Univ. Paris 13, CNRS, Sorbonne Paris Cit\'e, UMR 7030, 93430 Villetaneuse, France.}
\and
Mathieu Sablik
\footnote{Institut de Math\'ematiques de Toulouse, UMR 5219, Univ. de Toulouse, CNRS, Univ. P. Sabatier 
}
}
\date{}
\begin{document}

\maketitle

\begin{abstract}
  A linear subspace $E$ of $\mathbb{R}^n$ has {\em colored local rules} if there exists a finite set of decorated tiles whose tilings are digitizations of $E$. 
  The local rules are {\em weak} if the digitizations can slightly wander around $E$.
  We prove that a linear subspace has weak colored local rules if and only if it is computable.
  This goes beyond the previous results, all based on algebraic subspaces.
  We prove an analogous characterization for sets of linear subspaces, including the set of all the linear subspaces of $\mathbb{R}^n$.
\end{abstract}

\section{Introduction}

A {\em tiling} of a given space is a covering by interior-disjoint compacts called {\em tiles}.
The shapes of tiles yield constraints on the way they can fit together, as do the bumps and dents in a jigsaw puzzle.
The way these local constraints affect the global structure of tilings in non-trivial.
In particular, it was proven in the 1960's that there is no algorithm which, given a finite tile set as input, answers in finite time whether one can form a tiling of the whole plane with these tiles (each tile can be translated and used several times): this is known as the {\em undecidability of the domino problem} (\cite{Wang-1960} and \cite{Berger-1966}, or \cite{Robinson-1971} for a self-contained exposition).
The first key ingredients of the proof is the simulation of Turing computations by tilings of the plane.
The second one is the existence of {\em aperiodic tile sets}, that are finite tile sets which tile the plane but only in a non-periodic fashion.\\

The interest in aperiodic tile sets received a boost two decades later when new non-periodic crystals (soon called {\em quasicrystals}) were incidentally discovered by the chemist Dan Shechtman \cite{Shechtman-1984}.
The connection with tilings was indeed quickly made, with tiles modelling atom clusters and the constraints on the way they can fit together modelling finite range energetic interactions \cite{Levine-Steinhardt-1984}.
A classification of all the possible quasicrystalline structures, in the spirit of the Bravais-Fedorov classification of crystalline structures, is still to be found.\\

A popular way to obtain quasicrystalline structure is the {\em cut and projection} method, introduced by De Bruijn for the celebrated Penrose tilings in \cite{deBruijn-1981}.
The principle is to approximate an affine $d$-plane of $\mathbb{R}^n$, obtaining a tilings by rhomboedra, called {\em planar tiling}.
Rhomboedral tiles are of course specific, but they already provide a very rich framework to deal with (let us mention that Kenyon showed in \cite{Kenyon-1992} that any tiling problem whose tiles are topological closed ball of the plane can be reduced to a problem with polygonal tiles, so that rhomboedral tiles may be not that specific).
In particular, the rhomboedral tiles of a given planar tilings can also form tilings approximating other planes, or even tilings that are not planar at all.
The challenge is thus to enforce tiles to approximate the wanted planes by specifying rules on the ways they can locally fit together -- one speaks about {\em local rules}.\\

A systematic approach, initiated by Levitov in \cite{Levitov-1988} and further developped by Le and Socolar in \cite{Socolar-1990,Le-1995,Le-1997}, aims to characterize all the irrational planes whose approximating tilings can be defined by such local rules.
In particular, Levitov provided a powerful sufficient condition (the so-called {\em SI-condition}) and Le showed that such planes can always be defined by algebraic irrationalities, that is, there is an {\em algrebraic obstruction}.
However, no complete characterization has yet be found.\\

Local rules can also be naturally enriched by coloring tiles, each in a color from a given finite palette.
This indeed goes back to the first works on aperiodic tile set mentioned at the beginning of the introduction, where tiles have colored edges and can match only along identically colored edges -- one speaks about {\em matching rules} or {\em colored local rules}.
Rephrased in terms of symbolic dynamics following \cite{Robinson-2004}, this means considering {\em sofic multi-dimensional subshifts} instead of only {\em finite type} ones.
Many planes have been shown to be approximated by tilings which can be defined by colored local rules ({\em e.g.}, \cite{deBruijn-1981,Socolar-1989,Le-Piunikhin-Sadov-1992,Le-Piunikhin-1995,Le-1995,Fernique-Ollinger-2010}).
But a complete characterization is here also missing.
In particular, one can wonder whether the previous algebraic obstruction of Le still holds (all the mentioned planes are indeed algebraic).\\

In this paper, we show that the algebraic obstruction breaks down when colors are allowed and must be replaced by a {\em computability obstruction}, which moreover turns out to be tight: an irrational plane admits colored local rules if and only if it can be defined by computable irrationalities.
We thus get a complete characterization, indeed the first one.
This can be compared to the landmark result of Hochman and Meyerovich \cite{Hochman-Meyerovich-2010}, who characterized the values of entropies of multidimensional shifts of finite type (the tilings with colored local rules being a generalization of) by a computation-theoretic property (namely right recursively enumerability).
Computability thus seems to play a natural role in multidimensional symbolic dynamics.\\

Actually, our main theorem (Theorem~\ref{th:main}) deals not only with computable planes but with computable {\em sets} of planes.
The case of a single plane is an immediate corollary, but a surprising corollary is that colored local rules can also enforce the set of all planes, that is, planarity itself.
We also show that colors can even be removed by encoding them into slight fluctuations around the approximated planes; this does not contradict the algebraic obstruction of Le because, as we shall see, the arising local rules are different from thoses introduced by Levitov.\\

We follow a computational approach, as in the proof of the undecidability of the domino problem.
We use colored local rules to encode simulations of Turing computations which check that only planar tilings that approximate the wanted planes can be formed.
The fundamental tool is a recent result in symbolic dynamics, which states that {\em any effective one-dimensional subshift can be obtained as the subaction of a two-dimensional sofic subshift} \cite{Aubrun-Sablik-2013,Durand-Romashchenko-Shen-2012} and also \cite{Hochman-2009}.
Besides this, our main ingredient is a slight extension of Sturmian words, called {\em quasi\-sturmian words}.
Roughly, quasisturmian words allow to split the checking of the parameters of approximated planes into a product of independant checking of each parameter.\\

The paper is organized as follows.
In Section~\ref{sec:settings}, we introduce the formalism and state our main result, Theorem~\ref{th:main}.
We also provide an extensive survey of existing results concerning local rules for cut and projection tilings.
Section~\ref{sec:computability_obstruction} shows that one cannot go beyond computability with colored local rules.
The converse is proven in Section~\ref{sec:weak_colored_rules} after introducing quasisturmian words in Section~\ref{sec:quasisturmian}.
Section~\ref{sec:uncolor} finally shows that colored local rules can actually be simulated by uncolored ones at the price of a slightly coarser plane digitization (we coined the term {\em weakened} local rules to distinguish it from weak local rules).

\section{Settings}
\label{sec:settings}

\subsection{Planar tilings}

Let us first recall some basics definitions on tilings, following \cite{Robinson-2004,Sadun-2008,Baake-Grimm-2013}.
A {\em tile} is a compact subset of $\mathbb{R}^d$ which is the closure of its interior.
Two tiles are equivalent if one is a translation of the other; the equivalence classes representatives are called {\em prototiles}.
A {\em tiling} of $\mathbb{R}^d$ is a covering by interior-disjoint tiles.
Given two tilings $\mathcal{T}$ and $\mathcal{T}'$, let $R(\mathcal{T},\mathcal{T}')$ be the supremum of all radii $r$ such that $\mathcal{T}$ and $\mathcal{T}'$ can be translated by a vector shorter than $\frac{1}{2r}$ in order to agree on a ball of radius $r$ around the origin.
The {\em distance} between two tilings is then defined as the smaller of $1$ and $1/R(\mathcal{T},\mathcal{T}')$.
In other words, two tilings are close if, up to a small translation, they agree on a large ball around the origin.
This distance defines a topology which makes the set of all tilings compact, provided that it has {\em finite local complexity}, that is, there is only finitely many ways two tiles can be adjacent in a tiling.
A set of tilings which is closed and invariant by translation is called a {\em tiling space}.\\

Let us define the specific tilings that we shall consider throughout this paper.
They have been introduced in theoretical physics as random tiling models (see, {\em e.g.}, \cite{Henley-1991}).
Let $\vec{v}_1,\ldots,\vec{v}_n$ be pairwise non-colinear vectors of $\mathbb{R}^d$, $n>d>0$.
A {\em $n\to d$ prototile} is a parallelotope generated by $d$ of the $\vec{v}_i$'s, {\em i.e.}, the linear combinations with coefficient in $[0,1]$ of $d$ of the $\vec{v}_i$'s.
Then, a {\em $n\to d$ tiling} is a {\em face-to-face} tiling of $\mathbb{R}^d$ by $n\to d$ tiles, {\em i.e.}, a covering of $\mathbb{R}^d$ by $n\to d$ tiles which can intersect only on full faces of dimension less than $d$.
Face-to-face condition ensures finite local complexity, hence compactness of the set of $n\to d$ tilings.\\

We shall now explain how to {\em lift} a $n\to d$ tiling (following \cite{Levitov-1988}).
Let $\vec{e}_1,\ldots,\vec{e}_n$ be the canonical basis of $\mathbb{R}^n$.
Given a $n\to d$ tiling, we first map an arbitrary vertex onto the origin, then we map each tile generated by $\vec{v}_{i_1},\ldots,\vec{v}_{i_d}$ onto the $d$-dimensional face of a unit hypercube of $\mathbb{Z}^n$ generated by $\vec{e}_{i_1},\ldots,\vec{e}_{i_d}$, with two tiles adjacent along the edge $\vec{v}_i$ being mapped onto two faces adjacent along the edge $\vec{e}_i$.
This defines, up to the choice of the initial vertex, the {\em lift} of the tiling.
This is a digital $d$-dimensional manifold in $\mathbb{R}^n$, and $d$ and $n-d$ are respectively called the dimension and the codimension of the tiling.\\

We are now in a position to define planarity.
A $n\to d$ tiling is {\em planar} if there is an affine $d$-dimensional plane (a $d$-plane) $E\subset \mathbb{R}^n$ and $t\geq 1$ such that this tiling can be lifted into the tube $E+[0,t]^n$.
The space $E$ is called the {\em slope} of the tiling and the smallest suitable $t$ -- its {\em thickness}.
One checks that both the slope and the thickness of a planar tiling are uniquely defined.
The case when $t=1$ and $E$ contains no integer point brings us back to the classical cut and projection tilings \cite{deBruijn-1986,Gahler-Rhyner-1986}.
For larger $t$, fluctuations are allowed which do not affect the long range order of the lift.
By extension, a tiling space is said to be planar if its tilings are all planar, and it is said to be uniformly planar if the thickness of its tilings is uniformly bounded.\\

\subsection{Local rules}
\label{sec:local_rules}

A finite set of tiles occuring in a tiling is called a {\em patch}.
One calls {\em local rules} a finite set of patches.
A set $\mathcal{F}$ of local rules defines the set $X_{\mathcal{F}}$ of the tilings in which no patch of $\mathcal{F}$ does occur -- the patches in $\mathcal{F}$ are said to be {\em forbidden}.
The set $X_{\mathcal{F}}$ turns out to be a tiling space; following the symbolic dynamics terminology of \cite{Robinson-2004}, it is said to have {\em finite type}.\\

As mentioned in the introduction, local rules can be {\em colored}: each tile can be endowed with one color from a given finite palette.
This defines colored tilings and, by removing colors, a tiling space.
Following again the symbolic dynamics terminology of \cite{Robinson-2004}, such a tiling space is said to be {\em sofic}.
Colored local rules are actually equivalent to so-called {\em matching rules}, where tiles are decorated ({\em e.g.} by coloring edges) and allowed to fit together only if their decorations match, as do the Wang tiles introduced in \cite{Wang-1960} or the two arrowed rhombi discovered by Penrose \cite{Penrose-1974}.
Any finite type tiling space is sofic but the converse does not always hold.\\

Let us now focus on planar tilings.
In \cite{Levitov-1988}, Levitov introduced the notions of {\em strong local rules} and {\em weak local rules}, that we here complete by {\em weakened local rules}.
Formally, a set $\mathcal{E}$ of $d$-planes of $\mathbb{R}^n$ is said to {\em admit} (or to {\em be enforced by}) {\em weakened local rules} if there is a set $\mathcal{F}$ of local rules and an integer $t\geq 1$, called the {\em thickness} of theses local rules, such that
\begin{itemize}
\item $X_{\mathcal{F}}$ contains a planar tiling of thickness at most $t$ with a slope parallel to $E$ for each $E\in\mathcal{E}$;
\item $X_{\mathcal{F}}$ contains only planar tilings of thickness $t$ with a slope parallel to some element of $\mathcal{E}$.
\end{itemize}
Weakened local rules become {\em weak} (resp. {\em strong}) local rules if we moreover assume $t=1$ in the first (resp. both) of the two above conditions.
Note that $X_{\mathcal{F}}$ does not necessarily contain all the planar tilings of thickness $t$ with a slope parallel to an element of $\mathcal{E}$: this is why we prefer to say that local rules enforce planes, not tilings.
All this naturally extends to colored local rules.

\subsection{Computability}
\label{sec:computability}

A complete introduction to computable analysis can be found in \cite{Weihrauch-2000}.
We here just briefly recall the notion that will be used in this paper.\\

First, a real number is said to be {\em computable} if it can be approximated by a rational within any desired precision by a finite, terminating algorithm.
For example, $\pi=3.1415\ldots$ can be approximated by the partial summations of numerous series, but not the real number whose $i$-th binary digit is $0$ if and only if the $i$-th computer program ({\em e.g.}, following the lexicographic order) halts.\\

More generally, a metric space $(X,d)$ is said to be {\em computable} if it contains a countable dense subset, called the {\em rational points} of $X$, and a computable function $\rho$ which gives the distance between any two rational points of $X$.
An open subset $A\subset X$ is then said to be {\em recursively open} if there is a non-terminating algorithm that outputs infinite sequences $(q_n)$ of rational points and $(r_n)$ of rational numbers such that $A$ is the union of the open balls of center $q_n$ and radius $r_n$ (one speaks about an {\em effective enumeration} of $A$).
The complement of a recursively open set is said be {\em recursively closed}.
When the space is compact, one shows that $x\in X$ is computable iff the set $\{x\}$ is recursively closed.\\

Throughout this paper, we shall be particularly interested in two specific metrical spaces.
The first one is the set of real numbers endowed with the Euclidean distance, with the rational points being the usual rational numbers.
The second one is the space of the $d$-planes of $\mathbb{R}^n$ endowed with the metric
$$
d(E,F)=\max\left\{\sup_{\vec{x}\in E\cap S} \inf\{||\vec{x}-\vec{y}||~:~\vec{y}\in F\},\sup_{\vec{x}\in F\cap S} \inf\{||\vec{x}-\vec{y}||~:~\vec{y}\in E\}\right\},
$$
where $S$ denotes the unit sphere of $\mathbb{R}^n$.
This space is known to be compact. 
Its rational points are the $d$-planes generated by vectors with rational entries.


\subsection{Results}
\label{sec:results}

We already provided in the introduction an overview of the existing results about local rules for planar tilings.
Let us here classify them according to the dual distinction weak/strong and colored/uncolored, trying to be as comprehensive as possible.
We shall then state the main result of this paper, which is a characterization for colored weak local rules.

\paragraph{Colored strong local rules.}
They can be traced back to \cite{deBruijn-1981}, when de Bruijn proved that the Penrose tilings by rhombi - introduced with colored local rules in \cite{Penrose-1978} - are digitizations of an plane in $\mathbb{R}^5$ based on the golden ratio ({\em i.e.}, generated by vectors with entries in $\mathbb{Q}[\sqrt{5}]$).
Beenker \cite{Beenker-1982} then tried to find colored strong local rules for another example, namely a plane in $\mathbb{R}^4$ based on the silver ratio (generated by vectors with entries in $\mathbb{Q}[\sqrt{2}]$), unaware that it was already known by the ``Mysterious Mr. Ammann'' \cite{Senechal-2008} in a work that was only later published in \cite{Ammann-Grunbaum-Shephard-1992}.
This case appears also in \cite{Socolar-1989}, where another example in $\mathbb{R}^6$ is also provided (based on the same quadratic field).
The Penrose case was extended in \cite{Le-1995} for some parallel planes.
Let us also mention \cite{Mozes-1989,Goodman-Strauss-1998,Fernique-Ollinger-2010}, where the tilings obtained by so-called substitutions are proved to admit co\-lo\-red local rules; this apply to some planar tilings, but only with algebraic slopes.

\paragraph{Uncolored strong local rules.}
They appeared more or less as the same time as colored one.
The Penrose tilings are indeed known to also admit such local rules (see, {e.g.}, \cite{Senechal-1995}, p.177).
The already mentioned work of Beenker \cite{Beenker-1982} deserves to be here recalled, since looking for uncolored local rules for a particular plane, he actually found a one-parameter family of planes that was later used to prove that the wanted local rules do not exist \cite{Burkov-1988}.
It was moreover shown by Katz \cite{Katz-1995} that this whole family admits colored strong local rules, and uncolored one if the parameter ranges through a rational interval (see also \cite{Bedaride-Fernique-2013}).
This shows that local rules can not only characterize single planes but also {\em sets} of planes.
Last but not least, in \cite{Levitov-1988}, Levitov provided a necessary condition on a two- or three-dimensional linear subspace of $\mathbb{R}^n$ to admit uncolored strong local rules (a sort of rational dependency between the entries of vectors generating the slope).

\paragraph{Uncolored weak local rules.}
The adjectives ``strong'' and ``weak'' for local rules have actually been coined in \cite{Levitov-1988}.
There, Levitov proved that its necessary condition for uncolored strong local rules is also a sufficent one for uncolored weak local rules, at least for linear subspaces of $\mathbb{R}^4$ and some other particular cases, namely the generalized Penrose tilings introduced in \cite{Pavlovitch-Kleman-1987} and the icosahedral tilings (whose slope is a three-dimensional subspace of $\mathbb{R}^6$ based again on the golden ratio), see also \cite{Katz-1988,Socolar-1990} for this latter.
In \cite{Socolar-1990}, it is proven that the planar tiling with a $n$-fold rotational symmetry admits (rather simple) uncolored weak local rules as soon as $n$ is not a multiple of $4$.
It has then been proven that there is no such rules as soon as $n$ is a multiple of $4$ (first for $n=8$ in \cite{Burkov-1988}, then for any $n$ in \cite{Bedaride-Fernique-2015b}).
A complete characterization for two-dimensional linear subspace of $\mathbb{R}^4$ finally emerged in \cite{Bedaride-Fernique-2015,Bedaride-Fernique-2016}; in particular only quadratic slopes can be characterized by uncolored weak local rules.
The characterization for $d$-dimensional linear subspace of $\mathbb{R}^n$ is still to be obtained.
Maybe one of the most remarkable result in this way is the obstruction proved by Le in \cite{Le-1997}, who showed that such linear spaces are necessarily defined by vectors of an algebraic number field.

\paragraph{Colored weak local rules.}
Although computational considerations were central in the first colored local rules, see {\em e.g.}, \cite{Wang-1960,Berger-1966}, this was not the case in the (rare) known results for colored weak local rules.
They have indeed been shown in \cite{Le-Piunikhin-Sadov-1992} to exist for any quadratic planes of $\mathbb{R}^k$, a result then extended to quadratic $d$-dim. planes of $\mathbb{R}^{2d}$ in \cite{Le-Piunikhin-1995}.
This paper brings back computational consideration in this framework and provides a complete characterization of all the linear subspace, and even sets of such subspaces, that admit colored weak local rules.
We shall indeed prove that colored weak local rules allow to go much further than the algebraic obstruction of \cite{Le-1997} for uncolored weak local rules.
Formally:

\begin{theorem}\label{th:main}
A set of planes is enforced by colored weak local rules if and only if it is recursively closed.
\end{theorem}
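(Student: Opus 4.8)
\emph{The plan} is to prove the two implications separately: the forward one (necessity of recursive closedness) is an effectivization of a compactness argument, while the backward one (sufficiency) carries the real content.

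\emph{Necessity.} Assume a finite set $\mathcal{F}$ of colored local rules of thickness $t$ enforces $\mathcal{E}$. The two defining conditions of weak local rules say precisely that $\mathcal{E}$ is the set of slopes of the tilings of $X_{\mathcal{F}}$. First I would check that the slope map $\mathcal{T}\mapsto E(\mathcal{T})$ is (uniformly) continuous on the uniformly planar space $X_{\mathcal{F}}$: if two thickness-$t$ planar tilings agree on a large ball, the large patch of lift points they share forces the two width-$t$ tubes to be nearly parallel, so $\mathcal{E}$ is the continuous image of a compact set and hence closed. To upgrade this to \emph{recursive} closedness I would enumerate its complement: a $d$-plane $F\notin\mathcal{E}$ iff $X_{\mathcal{F}}$ has no tiling of slope parallel to $F$, and by compactness this happens iff there is a radius $r$ such that no patch of radius $r$ is simultaneously legal (avoids the forbidden patterns of $\mathcal{F}$) and $(F,t)$-tube-feasible. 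For fixed $r$ there are only finitely many patches (finite local complexity), and for each legal one the set of planes for which it is tube-feasible is a closed, effectively described subset of the compact computable space of $d$-planes; so the set of planes admitting no legal radius-$r$ feasible patch is effectively open, and the union over $r$ effectively exhausts the complement of $\mathcal{E}$. The only delicate point is the routine computable-analysis bookkeeping turning ``effectively presented sofic subshift'' into ``recursively closed slope set''.

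\emph{Sufficiency.} Now let $\mathcal{E}$ be recursively closed. I would first code a $d$-plane $E$ by a finite tuple $\theta(E)=(\alpha_1,\dots,\alpha_m)$ of real parameters (Grassmann/affine-chart coordinates of its slope) through a bicontinuous computable map, so that $\theta(\mathcal{E})$ is a recursively closed subset of a compact box; equivalently, there is a Turing machine that, fed a fast-converging rational approximation of a point, halts iff that point lies outside $\theta(\mathcal{E})$. The geometric input is the structure theory of thickness-$1$ planar tilings: such a tiling of slope $E$ is a cut-and-project tiling, reading it along de Bruijn families of worms yields Sturmian-type sequences whose letter frequencies recover the $\alpha_i$, and renormalizing these sequences by the Euclid/continued-fraction algorithm exhibits the tiling as an $S$-adic limit of substitutions on rhombi, the substitution chosen at level $k$ being selected by the $k$-th ``digit'' of the $\alpha_i$'s. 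The role of the quasisturmian words of Section~\ref{sec:quasisturmian} is to package the $m$ entangled per-worm data into one sequence over a product alphabet from which the $m$ digit streams remain recoverable coordinatewise, so that the $m$ parameters can be checked independently and the whole situation reduces to a single effective one-dimensional subshift.

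I would then assemble colored local rules in three superimposed layers. A first layer carries the rhombus tiling together with hierarchical markers forcing it to be an $S$-adic tiling for the fixed family of substitutions; this is the self-simulating fixed-point construction of \cite{Aubrun-Sablik-2013,Durand-Romashchenko-Shen-2012} (generalizing the substitution-enforcing rules of Mozes and Goodman--Strauss), and it is here that planarity with a uniformly bounded thickness $t$ is \emph{forced} at all, the thickness slack absorbing the imperfections that pure local rules cannot exclude. A second layer extracts along the worms the quasisturmian digit streams encoding $\theta(E)$. A third layer runs, on the computation space provided by the $S$-adic hierarchy (which offers unboundedly much room as one zooms out), the Turing machine above on that extracted approximation of $\theta(E)$, with local consistency arranged to hold exactly when the machine never halts, i.e. exactly when $\theta(E)\in\theta(\mathcal{E})$. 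This yields both halves of the definition: every $E\in\mathcal{E}$ is realized, and using the genuine perfect $S$-adic digitization of $E$ the realizing tiling can be taken of thickness exactly $1$; conversely every tiling of $X_{\mathcal{F}}$ is planar of thickness $\le t$ with slope in $\mathcal{E}$. The main obstacle is not the computability plumbing but welding this hierarchical, essentially grid-based self-simulation onto the intrinsic geometry of $n\to d$ rhombus tilings: turning the loose condition ``the tiling approximates some plane'' into local rules that force bounded thickness for an \emph{arbitrary} admissible slope (the mechanism behind the ``planarity itself'' corollary), guaranteeing that the data read off the tiling encodes the continuous parameters $\theta(E)$ faithfully and locally, and making the quasisturmian decoupling of the $m$ parameters compatible with the worm structure.
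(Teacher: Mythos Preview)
Your necessity argument is essentially the paper's (Proposition~\ref{prop:ens_pentes}): enumerate legal patches of growing radius, compute for each the set of compatible slopes, and use compactness to effectively enumerate the complement of $\mathcal{E}$.

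Your sufficiency strategy, however, diverges from the paper and has a genuine gap. You propose to force planarity by imposing an $S$-adic substitution hierarchy directly on the rhombus tiling and then run a Turing machine on that hierarchy to check the slope. But forcing an $S$-adic structure with an \emph{arbitrary} directive sequence by local rules is not a known result: Mozes and Goodman--Strauss handle a single fixed substitution, and the self-simulating constructions of \cite{Aubrun-Sablik-2013,Durand-Romashchenko-Shen-2012} live on the square grid $\mathbb{Z}^2$, not on rhombus tilings of variable slope. The ``welding'' you flag as the main obstacle is exactly where the argument would have to happen, and nothing you wrote indicates how to do it. You have also misread the role of quasisturmian words: they are not a device to package several parameters into a product alphabet, but a \emph{relaxation} of a single Sturmian word (balance-distance $\le 1$) whose sole purpose is to let the intercept vary freely from ribbon to ribbon while remaining sofic.

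The paper avoids the $S$-adic route entirely. It stays on the one-dimensional subshift level: Sturmian words with slope in a recursively closed set form an effective $\mathbb{Z}$-subshift (Proposition~\ref{prop:sofic_sturmians_computable_slopes}), so by Theorem~\ref{th:subaction} the $\mathbb{Z}^2$-subshift $S_A$ of stacked identical Sturmian lines is sofic. Since the ribbons of a genuine planar tiling have varying intercepts, the paper relaxes $S_A$ to the quasisturmian subshift $S'_A$ and shows it is still sofic (Lemma~\ref{lem:alternated_replacements}, Proposition~\ref{prop:recursive_subshift}) and large enough to contain every ribbon configuration (Proposition~\ref{prop:parallel_sturmian}). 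The Wang tiles realizing $S'_A$ are then \emph{sheared} into rhombi with extra ``transfer'' tiles, a cartesian product over the three ribbon directions handles the several slope parameters, and a two-ribbon crossing argument bounds the resulting thickness by~$2$ (Proposition~\ref{prop:weak_local_rules_one_plane}). For a general recursively closed set one adds a synchronization layer that hides a second Sturmian word inside each ribbon and lets it flow diagonally to couple the parameters (Proposition~\ref{prop:weak_local_rules_plane_set}); higher codimension and dimension are then handled by a straightforward induction. The clean separation between the sofic step (Theorem~\ref{th:subaction} used as a black box on $\mathbb{Z}^2$) and the geometric step (shearing and ribbons) is what makes the argument go through without ever having to force a hierarchical structure on the rhombus tiling itself.
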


We moreover show that colored weak local rules can be replaced in the above theorem by (uncolored) weakened local rules for any set of planes satisfying a condition of non-degeneration (see Corollary~\ref{cor:uncolor}, Section~\ref{sec:uncolor}).
Let us also state two immediate corollaries:

\begin{corollary}
A plane is enforced by colored weak local rules if and only if it is computable.
\end{corollary}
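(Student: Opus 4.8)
The plan is to obtain this as the promised immediate consequence of Theorem~\ref{th:main}. First I would apply that theorem to the singleton $\mathcal{E}=\{E\}$, where $E$ is an arbitrary $d$-plane of $\mathbb{R}^n$: it states that $\{E\}$ is enforced by colored weak local rules if and only if $\{E\}$ is recursively closed as a subset of the metric space of $d$-planes described in Section~\ref{sec:computability}.

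Next I would invoke the general characterization of computable points recalled in that section: in a compact computable metric space, a point $x$ is computable precisely when the singleton $\{x\}$ is recursively closed. The space of $d$-planes of $\mathbb{R}^n$, equipped with the metric $d(\cdot,\cdot)$ given there and with rational points the planes generated by vectors with rational entries, is compact and computable, so this fact applies directly and yields the equivalence ``$\{E\}$ recursively closed'' $\iff$ ``$E$ computable''. Chaining the two equivalences proves the corollary.

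Since everything reduces to Theorem~\ref{th:main}, I do not expect any genuine obstacle here. The only point meriting a line of justification is that ``computable plane'' understood informally (definable by computable irrational coordinates in some chart) coincides with ``computable point'' of the metric space above; this follows from the observation that, locally, writing a plane as the graph of a linear map makes $d(\cdot,\cdot)$ bi-Lipschitz equivalent to the Euclidean distance on the matrix of coefficients, so rational approximations transfer both ways with a computable modulus of convergence.
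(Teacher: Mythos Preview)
Your proposal is correct and matches the paper's intended approach exactly: the paper presents this as an immediate corollary of Theorem~\ref{th:main} with no separate proof, and the bridge you use---that in a compact computable metric space a point is computable iff its singleton is recursively closed---is precisely the fact recalled in Section~\ref{sec:computability} for this purpose. Your extra line justifying that the informal and metric notions of ``computable plane'' agree is a welcome clarification that the paper leaves implicit.
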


\begin{corollary}\label{cor:planarity_is_sofic}
The set of all planes is enforced by colored weak local rules.
\end{corollary}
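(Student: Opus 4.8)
The plan is to obtain this as an immediate application of Theorem~\ref{th:main}. By that theorem, the set of all $d$-planes of $\mathbb{R}^n$ is enforced by colored weak local rules if and only if it is recursively closed in the metric space of $d$-planes described in Section~\ref{sec:computability}. So the only thing to verify is that this particular set is recursively closed — and it is nothing else than the whole ambient space.

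First I would recall that a subset is recursively closed exactly when its complement is recursively open, and that the complement of the full space of $d$-planes is the empty set. The empty set is recursively open: one exhibits an effective enumeration outputting, say, a sequence $(q_n,r_n)$ with $q_n$ a fixed rational plane and $r_n=0$ for all $n$, so that the union of the corresponding open balls is empty. (If one prefers a convention forcing the radii to be positive, one instead takes the non-terminating algorithm that never outputs anything, the union over the empty index set being empty.) Either way, the full space of $d$-planes is recursively closed.

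Combining the two observations, the set of all planes is recursively closed, hence enforced by colored weak local rules by Theorem~\ref{th:main}, which is the claim. The corollary thus requires no real work beyond the theorem; the only point deserving a line of care is fixing the convention under which the empty set counts as recursively open, and all the substantive content — the explicit decorated rhombi simulating the Turing machines that check the (here trivial) recursively closed condition — is already packed into the proof of Theorem~\ref{th:main}. In particular there is no genuine obstacle in the proof itself: the surprising part is not the derivation but its meaning, namely that finitely many colored rhombi can forbid every non-planar $n\to d$ tiling while still admitting, for every slope, at least one planar tiling of bounded thickness.
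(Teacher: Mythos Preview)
Your proposal is correct and matches the paper's approach exactly: the corollary is stated there as an immediate consequence of Theorem~\ref{th:main}, with no separate proof given, and your verification that the full space of $d$-planes is recursively closed (its complement being the empty set, trivially a recursively open union) is precisely the missing one-line check.
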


\section{Computability obstruction}
\label{sec:computability_obstruction}

\subsection{The case of a single plane}

\begin{proposition}\label{prop:une_pente}
A plane which admits local rules (of any type) is recursive.
\end{proposition}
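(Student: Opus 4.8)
The plan is to show that if a plane $E$ is enforced by local rules $\mathcal{F}$ of thickness $t$, then one can semi-decide, for any rational $d$-plane $F$ and any rational radius $\varepsilon$, whether $d(E,F)>\varepsilon$; equivalently, that the set of rational planes at distance $>\varepsilon$ from $E$ is recursively open, so that $\{E\}$ is recursively closed and $E$ is computable. The key observation is that, since $E$ is enforced by $\mathcal{F}$, the plane $E$ is completely determined by $\mathcal{F}$ and $t$: it is the unique slope (up to translation) of the planar tilings in $X_{\mathcal{F}}$, which by hypothesis is a nonempty set. So all the information about $E$ is packed into the finite data $(\mathcal{F},t)$, and the task is to extract arbitrarily good approximations of $E$ from it algorithmically.

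The main step is a compactness-and-enumeration argument. First I would enumerate all finite patches of $n\to d$ tilings and, for each, check the decidable conditions that (i) it contains no forbidden patch of $\mathcal{F}$ and (ii) it lifts into some tube $F'+[0,t]^n$ for a rational plane $F'$ — the set of such "admissible" $(F',\text{patch})$ pairs can be effectively enumerated because membership of a lifted finite patch in a rational tube is a finite system of rational linear inequalities, hence decidable. The crucial point is that if a large admissible patch lifts into $F'+[0,t]^n$, then $F'$ must be close to $E$: a patch of lift-diameter $D$ lying in a thickness-$t$ tube around $F'$ forces the slope of any completion to lie within $O(t/D)$ of $F'$, and since every element of $X_{\mathcal{F}}$ has slope (parallel to) $E$, we get $d(E,F')=O(t/D)$. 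Conversely, by compactness of $X_{\mathcal{F}}$ and the fact that it contains a tiling of slope $E$, arbitrarily large admissible patches exist. Thus, searching through patches of increasing size, we will eventually find, for any target precision $\varepsilon$, an admissible patch large enough to pin down a rational plane $F'$ with $d(E,F')<\varepsilon$; outputting these $F'$ gives the required algorithmic approximation of $E$.

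I expect the main obstacle to be the quantitative geometric lemma in the previous paragraph: turning "a large finite patch lies in a thickness-$t$ tube around $F'$" into an explicit bound $d(E,F')\leq c\,t/D$ on the distance between $E$ and $F'$. This requires controlling how the slope of a planar tiling is determined by a finite window of its lift — essentially, that two planes whose thickness-$t$ tubes share a common lifted patch of diameter $D$ must be $O(t/D)$-close in the plane metric defined in Section~\ref{sec:computability}. This is a statement about the geometry of tubes and digital planes rather than anything computability-theoretic, and handling the case where $E$ is irrational (so that the lift never exactly sits in $E$, only in the $t$-tube) needs a little care, but it is a routine perturbation estimate. Everything else — the enumeration of patches, the decidability of the tube-membership test, and the appeal to compactness of $X_{\mathcal{F}}$ — is straightforward.
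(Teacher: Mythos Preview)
Your approach has the right skeleton—enumerate patches, use large locally valid patches to approximate the slope—and the geometric estimate you flag as the main obstacle is indeed routine. But there is a genuine gap that you do not address: a patch containing no forbidden pattern of $\mathcal{F}$ need not occur in any tiling of $X_{\mathcal{F}}$. Such patches are the \emph{deceptions} of \cite{Dworkin-Shieh-1995}. Your step ``since every element of $X_{\mathcal{F}}$ has slope parallel to $E$, we get $d(E,F')=O(t/D)$'' silently assumes that the admissible patch you found extends to some tiling in $X_{\mathcal{F}}$; if it does not, the patch may sit in a tube $F'+[0,t]^n$ with $F'$ far from $E$, and your algorithm outputs a wrong approximation. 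Compactness does guarantee that deceptions have bounded size, but that bound is not computable from $(\mathcal{F},t)$, so you cannot simply wait until $D$ is ``large enough''.

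The paper deals with this as follows. For $r'\geq r$, let $\mathcal{P}_{r,r'}$ be the set of $r$-patches that extend to some $r'$-patch with no forbidden subpattern. This set is computable, decreases in $r'$, and by compactness eventually contains only genuine $r$-patches of tilings in $X_{\mathcal{F}}$. One cannot compute the threshold $r'$ at which this happens, but one can compute the diameter of the slope set $s(\mathcal{P}_{r,r'})=\bigcup_{P\in\mathcal{P}_{r,r'}}s(P)$, and this diameter must eventually drop below $2t/r$. Since a genuine $r$-patch is always present in $\mathcal{P}_{r,r'}$ and has $E$ in its slope set, any slope in $s(\mathcal{P}_{r,r'})$ is then within $3t/r$ of $E$. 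This extra layer—growing $r'$ and using the computable diameter as a stopping criterion—is precisely what is missing from your argument.
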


\begin{proof}
Let $E$ be a plane admitting local rules of thickness $t$.
We associate with a patch $P$ the set $s(P)$ of the slopes of the thickness $t$ planar tilings $P$ is a patch of:
$$
s(P):=\{E \textrm{ affine subspace of } \mathbb{R}^n~|~P\subset E+[0,t]^n\}.
$$
Computing such a set is a classical problem in discrete geometry (see, {\em e.g.}, \cite{Gerard-Debled-Zimmermann-2005}).
Actually, the classical case is the one of a $2$-plane $E$ in $\mathbb{R}^3$ (because of the applications in digital imagery), but the methods easily extend to higher dimension and codimension.\\

We call {\em $r$-patch} a patch whose tiles cover the ball of radius $r$ centered at the origin and all intersect it.
If $P$ is an $r$-patch of a tiling satisfying the local rules, then any slope of $s(P)$ is at distance at most $t/r$ from $E$, whence $s(P)$ has diameter at most $2t/r$.
However, it is unclear how to compute such a patch (think about the {\em deceptions} of \cite{Dworkin-Shieh-1995}).\\

To deal with this, we introduce, for any $r'\geq r$, the set $\mathcal{P}_{r,r'}$ of the restrictions of the $r'$-patches satisfying the local rules to their tiles which intersect the ball of radius $r$ centered at the origin.
This is a set of $r$-patches which can be computed (though in exponential time).
By compacity of the tiling space defined by the local rules, for a large enough $r'$, $\mathcal{P}_{r,r'}$ contains only $r$-patches of a tiling satisfying the local rules and the diameter of $s(\mathcal{P}_{r,r'})$ is thus less than $2t/r$.\\

We can now give our algorithm.
We increment $r'$ and compute $s(\mathcal{P}_{r,r'})$ until it reaches a diameter less than $2t/r$.
Then, we compute a $r$-patch $P'\in\mathcal{P}_{r,r'}$.
On the one hand, whenever $P$ is a $r$-patch of a tiling satisfying the local rules, any slope in $s(P)$ yields an approximation within $t/r$ of $E$.
On the other hand, two slopes in $s(P')$ and $s(P)$ are at distance bounded by $2t/r$, the diameter of $s(\mathcal{P}_{r,r'})$.
Any slope in $s(P')$ thus yields an approximation within $3t/r$ of $E$.
Choosing a suitable $r$ yields the desired precision.
\end{proof}

This can be summarized by the following algorithm:

\begin{algorithm}
\KwData{Local rules of thickness $t$ enforcing a plane $E$, an integer $m$}
\KwResult{Approximation within $1/m$ of $E$}
$r\gets 3tm$\;
$r'\gets r$\;
\Repeat{$d\leq\frac{2}{3m}$}{
$r'\gets r'+1$\;
$d\gets $ diameter of $s(\mathcal{P}_{r,r'})$\;}
\KwRet an element of $s(\mathcal{P}_{r,r'})$\;
\label{algo:une_pente}
\caption{Approximation of the slope of a plane with local rules}
\end{algorithm}

Algorithm~\ref{algo:une_pente} can be adapted when the thickness is not given: different copies of the algorithm run in parallel with growing ``guessed'' thickness $t=1,2,\ldots$ until one of them indeed halts and returns the desired approximation.
The first algorithm to halt is not necessarily the one with the true thickness.
Actually, one cannot compute the thickness of local rules enforcing a plane, nor decide whether local rules enforce a plane.

\subsection{General case}

Let us show that the previous result extends to the case of a set of planes by suitably modifying the algorithm.

\begin{proposition}\label{prop:ens_pentes}
A set of planes wich admits local rules is recursively closed.
\end{proposition}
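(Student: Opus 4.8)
The plan is to adapt Algorithm~\ref{algo:une_pente} so that, instead of outputting a single slope approximation, it performs an effective enumeration witnessing that the complement of $\mathcal{E}$ is recursively open. Let $\mathcal{E}$ be a set of planes enforced by local rules of thickness $t$, and recall the sets $\mathcal{P}_{r,r'}$ and the slope sets $s(P)$ introduced in the proof of Proposition~\ref{prop:une_pente}. The key point is that $s(\mathcal{P}_{r,r'})$ is still computable, but now it need not have small diameter: a tiling in $X_{\mathcal{F}}$ may have any slope parallel to some element of $\mathcal{E}$. Instead, I would use that, for $r'$ large enough, every $r$-patch in $\mathcal{P}_{r,r'}$ does occur in some tiling of $X_{\mathcal{F}}$ (by compactness), hence its slope set $s(P)$ lies within distance $t/r$ of $\mathcal{E}$; conversely every $E\in\mathcal{E}$ is the slope of some planar tiling of thickness $\le t$ in $X_{\mathcal{F}}$, so $E$ is within $t/r$ of $s(\mathcal{P}_{r,r'})$ for every $r'\ge r$.

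First I would make precise what ``effective enumeration of the complement'' requires: a non-terminating algorithm producing rational planes $q_n$ and rational radii $\varepsilon_n$ such that the open balls $B(q_n,\varepsilon_n)$ exhaust the complement of $\mathcal{E}$. The idea is to dovetail over all rational planes $q$, all rational radii $\varepsilon>0$, and all $r$. For a given triple $(q,\varepsilon,r)$ with, say, $\varepsilon > 4t/r$, the algorithm increments $r'$ and computes the finite set $s(\mathcal{P}_{r,r'})$ (a computable set of $r$-patch slope-sets, each itself a computable region in the plane space); it checks whether the closed ball $\overline{B}(q,\varepsilon/2)$ is disjoint from $s(P)$ for every $P\in\mathcal{P}_{r,r'}$. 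If this disjointness is ever detected, the algorithm outputs $(q,\varepsilon/2)$. The soundness direction: if the ball $\overline{B}(q,\varepsilon/2)$ avoids $s(\mathcal{P}_{r,r'})$, then since $\mathcal{E}$ lies within $t/r$ of $s(\mathcal{P}_{r,r'})$ once $r'$ is large (and the slope sets only shrink as $r'$ grows), $\overline{B}(q,\varepsilon/2)$ is at distance $\ge \varepsilon/2 - t/r > 0$ from $\mathcal{E}$, so $B(q,\varepsilon/2)$ is genuinely in the complement. Completeness: if $F\notin\mathcal{E}$, then $\mathcal{E}$ being closed, $F$ has some positive distance $\delta$ from $\mathcal{E}$; picking a rational plane $q$ close to $F$, a rational $\varepsilon<\delta$, and $r$ large enough that $t/r$ is tiny, the disjointness test eventually succeeds for large $r'$ by the distance estimate, so $F$ is covered by an output ball.

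The main obstacle I expect is the two-sided control over $s(\mathcal{P}_{r,r'})$ as $r'$ varies, and making the disjointness test effective. The inclusion ``$\mathcal{E}$ lies within $t/r$ of $s(\mathcal{P}_{r,r'})$'' is immediate since every $E\in\mathcal{E}$ is realized by an honest planar tiling of thickness $\le t$, whose $r'$-patches lie in $\mathcal{P}_{r,r'}$. The converse inclusion ``every element of $s(\mathcal{P}_{r,r'})$ is within $t/r$ of $\mathcal{E}$'' only holds once $r'$ is large enough that $\mathcal{P}_{r,r'}$ contains only $r$-patches that actually extend to tilings of $X_{\mathcal{F}}$; this is exactly the compactness argument from Proposition~\ref{prop:une_pente}, and crucially it does \emph{not} require knowing how large $r'$ must be, since we only ever certify the complement and never claim membership. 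The remaining technical care is that each $s(P)$ is a region described by finitely many linear inequalities (the discrete-geometry computation of \cite{Gerard-Debled-Zimmermann-2005} extended to dimension $d$ and codimension $n-d$), so disjointness of a rational ball from a finite union of such regions is decidable; one must check that the metric on the plane space from Section~\ref{sec:computability} is compatible with these polyhedral descriptions up to computable comparison, which is routine. Assembling these pieces yields the effective enumeration of the complement of $\mathcal{E}$, hence $\mathcal{E}$ is recursively closed.
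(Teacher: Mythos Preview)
Your approach is essentially the same as the paper's: enumerate rational balls, test each against the computable set $s(\mathcal{P}_{r,r'})$, and output those certified to miss $\mathcal{E}$ via the two-sided approximation (``$\mathcal{E}$ is always within $t/r$ of $s(\mathcal{P}_{r,r'})$'' for soundness, ``$s(\mathcal{P}_{r,r'})$ is eventually within $t/r$ of $\mathcal{E}$'' for completeness). The paper carries this out with balls of radius $t/r$ centered at an enumeration $(F_k)$ of rational planes, testing $d(F_k,s(\mathcal{P}_{r,r'}))\ge 2t/r$.

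One quantitative slip to fix: in your soundness step, disjointness of $\overline{B}(q,\varepsilon/2)$ from $s(\mathcal{P}_{r,r'})$ together with $\mathcal{E}\subset s(\mathcal{P}_{r,r'})+B(0,t/r)$ only gives $d(q,E)>\varepsilon/2-t/r$ for $E\in\mathcal{E}$, not $d(q,E)\ge\varepsilon/2$; so the output ball $B(q,\varepsilon/2)$ may still meet $\mathcal{E}$. Either shrink the output radius to $\varepsilon/2-t/r$, or (as the paper does) test disjointness of a ball strictly larger than the one you output. With that adjustment your argument goes through.
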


\begin{proof}
Let $\mathcal{E}$ be a set of planes which admits local rules (of any type) of thickness $t$.
Let $(F_k)_{k\in\mathbb{N}}$ be a recursive enumeration of all the rational $d$-planes of $\mathbb{R}^n$.
We shall show that Algorithm~\ref{algo:ens_pente} (below) enumerates closed balls whose union is the complement of $\mathcal{E}$.
We keep the notation $s(\mathcal{P}_{r,r'})$ introduced in the proof of Proposition~\ref{prop:une_pente}.\\

First, consider a slope $E\in\mathcal{E}$.
For any $r'\geq r$, there exists $E'\in s(\mathcal{P}_{r,r'})$ such that $d(E,E')<\frac{t}{r}$.
If Algorithm~\ref{algo:ens_pente} outputs a ball $B(F_k,\frac{t}{r})$, then $d(F_k,E')\geq \frac{2t}{r}$.
Thus $d(F_k,E)\geq \frac{t}{r}$ and $E$ is never in a ball enumerated by Algorithm~\ref{algo:ens_pente}.\\

Now, consider a slope $G\notin\mathcal{E}$.
Let $\varepsilon=d(G,\mathcal{E})$.
It is positive since $\mathcal{E}$ is closed.
Fix $r\in\mathbb{N}$ such that $\varepsilon>\frac{3t}{r}$.
By density of the rational planes, there is $k\in\mathbb{N}$ such that $G\in B(F_k,\frac{t}{r})$.
We shall show that $B(F_k,\frac{t}{r})$ is enumerated by Algorithm~\ref{algo:ens_pente}.\\

First, the choices of $r$ and $F_k$ ensures that $d(F_k,\mathcal{E})> \frac{2t}{r}$.
Then, by compacity of the tiling space defined by the local rules, for a large enough $r'$, $\mathcal{P}_{r,r'}$ contains only $r$-patches of a tiling satisfying the local rules.
There is thus $E\in\mathcal{E}$ such that $d(E',E)\leq \frac{t}{r}$ for any $E'\in s(\mathcal{P}_{r,r'})$.
Fix such an $E'$.
The triangle inequality yields $d(F_k,E)\leq d(F_k,E')+d(E',E)$, that is, $d(F_k,E')\geq d(F_k,E)-d(E',E)$.
With $d(F_k,E)> \frac{2t}{r}$ and $d(E',E)\leq \frac{t}{r}$ this ensures that $d(F_k,E')>\frac{t}{r}$.
The ball $B(F_k,\frac{t}{r})$ is thus eventually enumerated by Algorithm~\ref{algo:ens_pente}.
\end{proof}

\begin{algorithm}
\KwData{Local rules of thickness $t$ enforcing a set of planes $\mathcal{E}$}
\KwResult{Enumeration of closed balls whose union is the complement of $\mathcal{E}$}
$m\gets 1$\;
$r'\gets 1$\;
$t\gets 1$\;
\While{True}{
\For{$r=1$ \KwTo $r=r'$}{
\For{$k=1$ \KwTo $k=m$}{
\If{$\min(d(F_k,E'),~E'\in s(\mathcal{P}_{r,r'}))\geq\frac{t}{r}$}{
\KwOut{$B(F_k,\frac{t}{r})$}
}
}
}
$m\gets m+1$\;
$r'\gets r'+1$\;
$t\gets t+1$\;
}
\label{algo:ens_pente}
\caption{Enumeration of the slopes of a set of planes with local rules}
\end{algorithm}

\section{Quasisturmian subshifts}
\label{sec:quasisturmian}

\subsection{Effective subshifts as sofic subactions}

The issue of local rules, introduced above in the context of tilings, also appears in the context of symbolic dynamics.
In this context, the role of tiles is played by letters over some finite alphabet $\mathcal{A}$, the role of tilings -- by multidimensional words indexed by $\mathbb{Z}^n$ called {\em configurations}, and the role of a tiling space -- by a set of configurations called {\em subshift} avoiding some set of ``forbidden'' finite patterns (a pattern is a word indexed by some finite subset of $\mathbb{Z}^n$).
A subshift is said to be of {\em finite type} if it can be defined by finitely many forbidden patterns.
It is said to be {\em sofic} if it is the image of a finite type subshift (over a possibly larger alphabet) under a letter-to-letter map called {\em factor map} (which plays the role of the color removal for tilings).
Last, a subshift is said to be {\em effective} or {\em recursive} if there exists a Turing machine which enumerates all the patterns that do not appear in any of its configurations.\\

One of the major recent results that binds computability and symbolic dynamics is the following one, proven indeendantly in \cite{Aubrun-Sablik-2013} and \cite{Durand-Romashchenko-Shen-2012}, both impro\-ving \cite{Hochman-2009}.
We shall strongly rely on it to prove Theorem~\ref{th:main}.

\begin{theorem}[\cite{Aubrun-Sablik-2013,Durand-Romashchenko-Shen-2012}]
\label{th:subaction}
If $S\subset\mathcal{A}^{\mathbb{Z}^n}$ is an effective subshift, then the following subshift is sofic:
$$
\widetilde{S}=\{x\in\mathcal{A}^{\mathbb{Z}^{n+1}} : \textrm{there is } y\in S \textrm{ such that }x(m,\cdotp)=y \textrm{ for all }m\in\mathbb{Z} \},
$$
where $x(m,\cdotp)\in\mathcal{A}^{\mathbb{Z}^n}$ is the $m$-th ``row'' of $x$, {\i.e.}, its first entry is set to $m$.
\end{theorem}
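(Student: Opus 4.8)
This is a known and deep result, so a ``proof'' here means reconstructing the simulation argument that links effective subshifts to sofic ones; I sketch the construction I would carry out.

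The plan is to exhibit $\widetilde S$ as a letter-to-letter factor of a subshift of finite type $Y$ on $\mathbb{Z}^{n+1}$, over an alphabet that splits into three superimposed layers. The \emph{content} layer carries a configuration $y\in\mathcal{A}^{\mathbb{Z}^n}$, copied identically along the extra $(n+1)$-th axis; constancy of the content along this axis is a nearest-neighbour condition, hence of finite type. The \emph{skeleton} layer carries an aperiodic self-similar hierarchy that forces the lattice to be partitioned into nested boxes (``macro-tiles'') of sides $L_k\to\infty$, each reserving an inner blank region --- a \emph{computation zone} whose size also grows with $k$. The \emph{computation} layer is where a fixed universal Turing machine runs inside these zones. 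The whole point is that forcing the common value $y$ to lie in $S$ means forbidding infinitely many patterns, which no finite-type rule can do directly; instead the check is delegated to the computations, and the hierarchy guarantees there is room to carry it out at every scale.

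Concretely, I would proceed as follows. (i) Fix, using the effectiveness of $S$, a Turing machine $M$ that enumerates a list $F_1,F_2,\dots$ of forbidden patterns defining $S$. (ii) Import the hierarchical SFT by finite local rules; I would use the fixed-point self-simulating tilings of \cite{Durand-Romashchenko-Shen-2012}, or, alternatively, an iterated Robinson-type construction as in \cite{Hochman-2009,Aubrun-Sablik-2013}, whose key feature is that level-$k$ macro-tiles are forced in every configuration and contain a computation zone of unbounded size. (iii) Add wiring (more finite-state rules) that streams the symbols of the content layer over the support of a macro-tile, together with position counters, into its computation zone, and have $M$ run there for on the order of $L_k$ steps, enumerating finitely many of the $F_i$ and scanning the available window of $y$ for an occurrence of each; if one is found, emit a forbidden local configuration. (iv) \emph{Soundness:} in any $x\in Y$ with content $y$, every finite window $W$ of $y$ lies inside macro-tiles of unboundedly large level running unboundedly long computations, so every $F_i$ is eventually tested against $W$; hence $y$ contains no forbidden pattern, i.e. $y\in S$, and the content-projection of $x$ is the stacking of $y$, an element of $\widetilde S$. (v) \emph{Surjectivity:} given $y\in S$, choose any legal hierarchical tiling (these exist) and put $y$ on the content layer; since $y$ has no forbidden pattern the computations never report a violation, and a computation that outruns the time or space of its macro-tile is simply truncated at the boundary, which the rules allow, so all three layers can be completed consistently. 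Thus the letter-to-letter projection onto the content layer maps $Y$ onto $\widetilde S$, so $\widetilde S$ is sofic.

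The hard part is step (iii): with only finite-state local rules one must route the symbols of $y$ from the content layer into the computation zone of \emph{every} macro-tile at every scale and position, and organize the address bookkeeping so that the macro-tiles collectively test \emph{every} pair (finite window of $y$, forbidden pattern). This coherent relay of information between nested hierarchy levels is exactly the intricate core of the simulation constructions. A secondary subtlety, needed to keep the dimension increase down to one, is that the skeleton and computation layers must not hide any symbol of the $\mathbb{Z}^n$-content; this is arranged by making those layers genuinely active only on a sublattice of density tending to zero, leaving all of $y$ readable.
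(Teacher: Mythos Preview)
The paper does not prove this theorem: it is quoted as a result from \cite{Aubrun-Sablik-2013,Durand-Romashchenko-Shen-2012} and only a five-line informal sketch of the idea is given (sofic subshift simulates a Turing machine; the machine runs along the extra dimension; parallel copies of $x\in S$ make it readable at any time; the machine enumerates forbidden patterns and checks none appears in $x$). Your proposal is a considerably more detailed expansion of exactly this approach---content layer constant along the new axis, hierarchical skeleton providing computation zones of unbounded size, a machine enumerating the $F_i$ and scanning the visible window---so at the level of strategy you match both the paper's sketch and the cited constructions.

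As a sketch your outline is faithful, and you correctly flag the genuinely hard step (routing content into every macro-tile at every scale with consistent addressing). One small caveat: your closing remark that the skeleton and computation layers are ``active only on a sublattice of density tending to zero, leaving all of $y$ readable'' is not how readability is handled in the actual constructions, nor is it needed---the content layer is a separate coordinate of the product alphabet, so $y$ is readable everywhere regardless of what the other layers carry; the density argument you allude to is relevant to entropy considerations, not to the factor map. This is a minor inaccuracy in an otherwise sound sketch of a result the paper itself does not attempt to prove.
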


The subshift $S$ is called a {\em subaction} of the subshift $\widetilde{S}$: the latter is indeed made of parallel copies of configurations of the former.
In other words, this theorem states that any effective $n$-dimensional subshift can be realized as a subaction of a $n+1$-dimensional sofic subshift.
In order to get a better picture of the construction developed in the following, it may be worthwhile to give an idea of the proof of the above result (following \cite{Aubrun-Sablik-2013}):
\begin{itemize}
\item the sofic subshift simulates a Turing machine;
\item this Turing machine runs along the extra-dimension of $\widetilde{S}$ (time dimension);
\item parallel copies of a given configuration $x\in S$ make it readable at anytime;
\item the Turing machine enumerates the forbidden patterns;
\item it checks that no enumerated forbidden pattern appears in $x$.
\end{itemize}
Of course, such a construction requires a huge number of tiles (which depends on the Kolmogorov complexity of the forbidden patterns).
Moreover, the time to detect a forbidden pattern can be very large, so that assembling a tiling tile by tile is highly unrealistic: errors can usually be detected only after many tiles have been added.
This construction however perfectly suits to prove the theoretical results of this paper.

\subsection{From tilings to subshifts}

Consider the lift of a $3\to 2$ tiling.
Let us orthogonally project along $\vec{e}_1+\vec{e}_2$.
A tile which contains an edge directed by $\vec{e}_3$ is projected onto a square, say of type $1$ or $2$ depending on whether the second edge of the tile is directed by $\vec{e}_1$ or $\vec{e}_2$.
A tile by $\vec{e}_1$ and $\vec{e}_2$ is projected onto a segment that we choose to ignore.
We thus get a tiling of the plane of normal vector $\vec{e}_1+\vec{e}_2$ by two types of squares\footnote{
  Except if the lift of the tiling is bounded in the direction $\vec{e}_3$, that is, if half of the tiling is made of tiles with edges directed by $\vec{e}_1$ and $\vec{e}_2$.
  In the case of a planar tiling, it means that the slope has normal vector $\vec{e}_3$.
  We consider this as a degenerated case (see also Sec.~\ref{sec:uncolor}) and we ignore it in what follows, since it is trivial to find local rules for such a planar tiling.}.
It can naturally be seen as a configuration in $\{1,2\}^{\mathbb{Z}^2}$, with $\vec{e}_3$ giving the vertical.
Figure~\ref{fig:from_3vers2_to_2vers2} illustrates this.\\

\begin{figure}[hbtp]
\includegraphics[width=\textwidth]{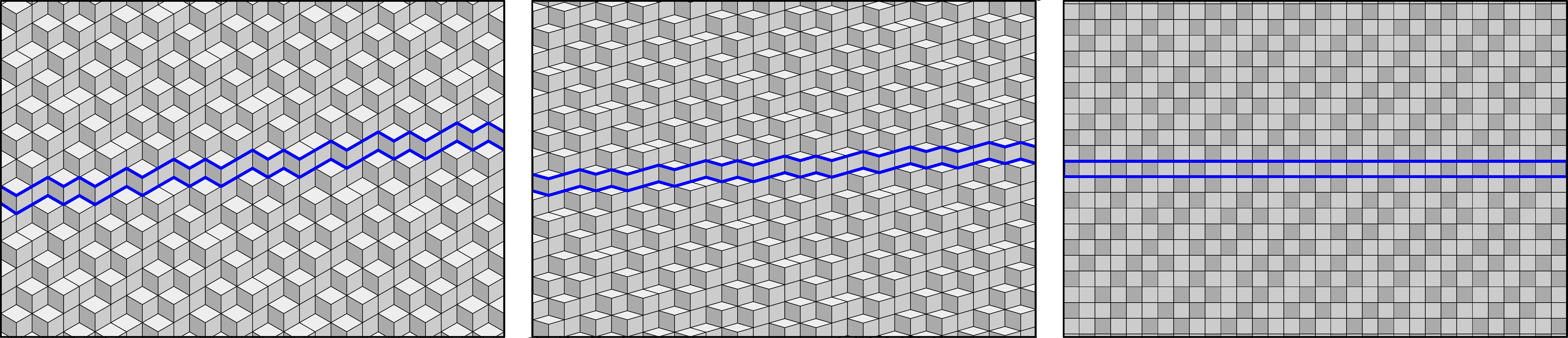}
\caption{Projection (with an intermediary step) of a $3\to 2$ tiling onto a configuration in $\{1,2\}^{\mathbb{Z}^2}$.}
\label{fig:from_3vers2_to_2vers2}
\end{figure}

By permuting edges, one can actually associate three configurations with any $3\to 2$ tiling.
Then, if $\mathcal{S}$ denote a set of $2$-planes in $\mathbb{R}^3$, let $S^i_{\mathcal{S}}$, $i=1,2,3$, denote the set of configurations associated with the $3\to 2$ tilings with slope in $\mathcal{S}$: these are $2$-dimensional subshifts on two letters.
We conjecture that when $\mathcal{S}$ is recursively closed, then these subshifts are sofic, but we have yet no proof.
Instead, we shall introduce slightly larger subshifts (namely {\em quasisturmian shifts}) and use Theorem~\ref{th:subaction} to prove that they are sofic.
The corresponding tiling space is also slightly larger: it contains planar tilings with the desired slopes but a possibly larger thickness (whence we get weak colored local rules and not strong ones).

\subsection{Sturmian subshifts}

A line of a configuration associated as above with a $3\to 2$ tiling corresponds to a ``stripe'' (sometimes referred to as {\em de Bruijn line} or {\em worm}), as depicted on Figure~\ref{fig:from_3vers2_to_2vers2} (framed tiles).
If we consider a planar $3\to 2$ tiling of thickness $1$ whose slope has normal vector $(\alpha,\beta,\gamma)$, then each of its stripe is contained into some tube $\vec{u}+\mathbb{R}(\alpha,\beta,0)+[0,1]^3$, where $\vec{u}\in\mathbb{Z}^3$.
The corresponding line in the associated configuration thus turns out to be a {\em Sturmian word} of slope $|\alpha/\beta|$.
Recall (see, {\em e.g.}, \cite{Lothaire-2002}) that the Sturmian word $s_{\alpha,\rho}\in\{1,2\}^{\mathbb{Z}}$ of {\em slope} $\alpha\in[0,1]$ and {\em intercept} $\rho\in[0,1]$ can be defined by
$$
s_{\alpha,\rho}(n)=1
~\Leftrightarrow~
(\rho+n\alpha)\mod 1\in[0,1-\alpha).
  $$
  By exchanging letters $0$ and $1$ in the Sturmian word $s_{\alpha,\rho}$ one gets then gets the Sturmian word of slope $1/\alpha$, so that any slope can be achieved.
A configuration associated with a planar $3\to 2$ tiling of thickness $1$ is thus made of stacked Sturmian words, all with the same slope but with different intercepts (depending on the parameter $\gamma$).
In order to use Theorem~\ref{th:subaction}, we would like to avoid these intercepts variations.
We therefore introduce {\em Sturmian subshifts}: those are the $2$-dim. subshifts whose configurations are formed by stacked Sturmian words, all with the same slope and the same intercept.
Formally, we associate with any non-empty subset $A\subset\mathbb{R}^+$ the Sturmian subshift
\begin{equation}\label{eq:S_A}
S_A=\{x\in\{0,1\}^{\mathbb{Z}^2}~:~\exists \alpha\in A,~\exists \rho\in[0,1],~\forall m\in\mathbb{Z},~x(m,\cdotp)=s_{\alpha,\rho}\},
\end{equation}
where $x(m,\cdotp)$ denotes the $m$-th row of $x$.
Theorem~\ref{th:subaction} and the proposition below then ensure that $S_A$ is sofic for a recursively closed $A$.

\begin{proposition}\label{prop:sofic_sturmians_computable_slopes}
The set of Sturmian words with a slope in a given recursively closed subset of $\mathbb{R}$ form an effective subshift.
\end{proposition}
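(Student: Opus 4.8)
The plan is to unwind the definition of ``effective subshift'' for the two-dimensional subshift $S_A$ and reduce everything to producing a Turing machine that enumerates its forbidden patterns. Since every configuration of $S_A$ consists of identical stacked rows, each row being a Sturmian word $s_{\alpha,\rho}$ with $\alpha\in A$, a finite pattern on a finite window $W\subset\mathbb Z^2$ appears in some configuration of $S_A$ if and only if (i) all its rows are equal, and (ii) that common row-pattern is a factor of some Sturmian word $s_{\alpha,\rho}$ with $\alpha\in A$. Condition (i) is trivially decidable, so the whole problem collapses to the one-dimensional question: enumerate the finite words over $\{0,1\}$ that are \emph{not} factors of any Sturmian word of slope in $A$.

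First I would recall the standard combinatorial description of Sturmian factors. A word $w\in\{0,1\}^\ell$ is a factor of some $s_{\alpha,\rho}$ precisely when the intersection of the intervals obtained by iterating the rotation by $\alpha$ according to the letters of $w$ is nonempty; concretely, $w$ is a factor of a Sturmian word of slope $\alpha$ iff the half-open interval $I_w(\alpha):=\bigcap_{k=0}^{\ell-1}\bigl(R_\alpha^{-k}J_{w(k)}\bigr)$ is nonempty, where $J_0=[0,1-\alpha)$, $J_1=[1-\alpha,1)$ and $R_\alpha$ is the circle rotation by $\alpha$. Each endpoint of $I_w(\alpha)$ is of the form $(j\alpha)\bmod 1$ for some integer $j$ with $|j|<\ell$, so $I_w(\alpha)\neq\emptyset$ is expressed by finitely many strict inequalities among the quantities $\{j\alpha\bmod 1\}$. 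Thus for each fixed $w$, the set $A_w:=\{\alpha\in[0,1]: w\text{ is a factor of some }s_{\alpha,\rho}\}$ is a finite union of open (or half-open) intervals with endpoints that are computable rationals times small integers — in particular $A_w$ is a recursively open subset of $[0,1]$, uniformly in $w$. (One also uses the elementary observation mentioned in the text that exchanging $0$ and $1$ turns slope $\alpha$ into slope $1/\alpha$, so it suffices to treat slopes in $[0,1]$ and then symmetrize; this lets us work on the compact circle parameter space.)

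Now I would put the two pieces together. A word $w$ is forbidden for $S_A$ exactly when $A\cap A_w=\emptyset$, equivalently when $A_w\subseteq A^c$. Since $A$ is recursively closed, its complement $A^c$ is recursively open, so we have an algorithm enumerating rational open balls (here: rational open intervals) whose union is $A^c$. Because $A_w$ is a \emph{compact-up-to-endpoints} finite union of intervals with computable endpoints — more precisely its closure $\overline{A_w}$ is a computable compact set — the inclusion $\overline{A_w}\subseteq A^c$ holds iff finitely many of the enumerated intervals already cover $\overline{A_w}$, which a machine can detect by dovetailing: enumerate more and more intervals of $A^c$ and check, after each new one, whether the finitely many so far cover the (finitely many, with computable endpoints) closed subintervals making up $\overline{A_w}$. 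If $w$ is genuinely forbidden this search halts; if not, it runs forever. Running this semi-decision procedure in parallel (dovetailed) over all finite words $w$ yields a single Turing machine that enumerates precisely the forbidden patterns of $S_A$, which is exactly the definition of $S_A$ being effective.

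The main obstacle is the boundary/closure issue in the previous paragraph: $A_w$ is only half-open, and $A$ is merely recursively \emph{closed}, so a naive attempt to certify $A\cap A_w=\emptyset$ by covering an open set with another open set need not terminate even when the intersection is empty (an open cover of a non-compact set has no finite subcover). The fix I would carry out carefully is to replace $A_w$ by its closure and argue that passing to the closure does not change whether the intersection with $A$ is empty: the finitely many endpoints of $A_w$ that are not already in $A_w$ correspond to degenerate ``boundary'' intercepts, and one checks — using that Sturmian factor sets are closed under the relevant limits, or simply by a direct case analysis on those finitely many rational-type endpoints — that an endpoint $\alpha_0\in\partial A_w$ lies in $A$ only if $\alpha_0$ itself is already a slope realizing $w$, i.e.\ only in cases that can be folded back into the open part. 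Once $\overline{A_w}$ is seen to be a genuinely computable compact set and the test becomes ``cover a computable compact set by a recursively open set,'' the semi-decidability is standard (this is precisely the statement that recursively open sets are r.e.\ and computable compact sets have the Heine--Borel property effectively). Everything else — decidability of the ``all rows equal'' condition, the bookkeeping of endpoints $j\alpha\bmod 1$, the dovetailing over all $w$ — is routine.
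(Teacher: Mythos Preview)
Your argument is essentially the paper's: for each finite word $w$, compute the set of slopes $I(w)$ for which $w$ is a Sturmian factor, semi-decide $I(w)\subseteq A^c$ by waiting for finitely many of the enumerated balls of $A^c$ to cover it, and dovetail over all $w$.

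Two remarks. First, the proposition is about the \emph{one-dimensional} subshift of Sturmian words, not about $S_A$; the soficity of the two-dimensional $S_A$ is obtained afterwards by applying Theorem~\ref{th:subaction} to this one-dimensional effective subshift. Your opening reduction (``all rows equal'') is therefore unnecessary here. Second, the paper shortcuts your rotation-interval computation by simply citing the digital-straightness literature for the fact that $I(w)$ is a single interval with rational, computable endpoints; the covering test then becomes a comparison of rational numbers. Your boundary worry, while carefully stated, largely evaporates once you use this: since we are working with the \emph{subshift} (closed under limits), the relevant slope interval is closed, so $\overline{A_w}=A_w$ and no separate endpoint case analysis is needed.
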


\begin{proof}
Let us first give an algorithm to eventually detect if a finite word $u$ is forbidden (the algorithm halts iff $u$ is forbidden).
We compute the slopes of the (infinite) Sturmian words $u$ can be a factor of.
This is a (possibly empty) rational open interval $I(u)$ which can be computed in almost linear time by classic methods (see \cite{Klette-Rosenfeld-2003}).
Let $(B_n)_n$ be an enumeration of rational open balls whose union is the complement of our recursively closed set of slopes.
The word $u$ is forbidden if and only if the union of the $k$-th first balls eventually  (when $k$ grows) contains $I(u)$.\\
Let us now use this algorithm to enumerate all the forbidden words.
We simply browse all the finite words ({\em e.g.}, by lexicographic order) and run the algorithm on each of them ``in parallel'', that is, we run one step of the algorithm on the $k$-th first browsed words before browsing the $k+1$-th one.
\end{proof}

\subsection{Relaxation}

The Sturmian subshift $S_A$ is sofic when $A$ is recursively closed, but unfortunately the subshift derived from a $3\to 2$ planar tiling are not exactly of this type (as already mentioned).
We shall here define a subshift $S'_A$ which is still sofic when $A$ is recursively closed and which contains the subshift derived from a set of $3\to 2$ planar tilings with suitable slopes.\\

Let us first define quasisturmian words.
Consider the set $\{0,1\}^{\mathbb{Z}}$ of bi-infinite words over the alphabet $\{0,1\}$ endowed with the metric $d$ defined by
$$
d(u,v):=\sup_{p\leq q}\left||u(p)u(p+1)\ldots u(q)|_0-|v(p)v(p+1)\ldots v(q)|_0\right|,
$$
where $w(k)$ denotes the $k$-th letter of $w$ and $|.|_0$ counts the occurences of $0$.
In other terms, the distance between two words is the maximum {\em balance} between their finite factors which begin and start at the same positions.
The {\em quasisturmian words} of slope $\alpha$ are the words in $\{0,1\}^{\mathbb{Z}}$ at distance at most one from a Sturmian word of slope $\alpha$.\\

We now can define {\em quasisturmian subshifts}: those are the $2$-dim. subshifts whose configurations are formed by stacked quasiturmian words, all with the same slope.
Formally, we associate with any non-empty closed subset $A\subset\mathbb{R}^+$ the quasisturmian subshift
\begin{equation}\label{eq:S'_A}
S'_A=\{x\in\{0,1\}^{\mathbb{Z}^2}~:~\exists \alpha\in A,~\exists \rho\in[0,1],~\forall m\in\mathbb{Z},~d(x(m,\cdotp),s_{\alpha,\rho})\leq 1\},
\end{equation}
where $x(m,\cdotp)$ denotes the $m$-th row of $x$.
Clearly $S_A\subset S'_A$, and the following proposition ensures that $S'_A$ also contains the subshift derived from a set of $3\to 2$ planar tilings with the set $\mathcal{S}$ of slopes such that $A=\{|\alpha/\beta|~:~\exists \gamma,~(\alpha,\beta,\gamma)\in \mathcal{S}\}$ ({\em w.l.o.g.}, we consider the projection along $\vec{e}_1+\vec{e}_2$).

\begin{proposition}\label{prop:parallel_sturmian}
Sturmian words with equal slopes are at distance at most one.
\end{proposition}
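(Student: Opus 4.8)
The plan is to work directly with the definition of the metric $d$ on $\{0,1\}^{\mathbb{Z}}$ and the explicit formula for Sturmian words. Fix a slope $\alpha\in[0,1]$ and two intercepts $\rho,\rho'\in[0,1]$; I want to bound
$$
d(s_{\alpha,\rho},s_{\alpha,\rho'})=\sup_{p\leq q}\left|\,|s_{\alpha,\rho}(p)\ldots s_{\alpha,\rho}(q)|_0-|s_{\alpha,\rho'}(p)\ldots s_{\alpha,\rho'}(q)|_0\,\right|
$$
by $1$. (The case of slopes outside $[0,1]$ reduces to this one by the letter-exchange remark already made in the text, which turns a word of slope $\alpha$ into one of slope $1/\alpha$ and swaps the roles of the two letters, leaving the balance unchanged.)

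The key step is a counting identity. First I would recall the standard fact that for the Sturmian word with the convention $s_{\alpha,\rho}(n)=1 \Leftrightarrow (\rho+n\alpha)\bmod 1\in[0,1-\alpha)$, the number of $0$'s occurring among the letters at positions $p,p+1,\ldots,q$ equals the number of integers $n\in\{p,\ldots,q\}$ with $(\rho+n\alpha)\bmod 1\in[1-\alpha,1)$, which is exactly the number of points of the arithmetic progression $\{\rho+n\alpha : p\leq n\leq q\}$ that land, modulo $1$, in a fixed half-open arc of length $\alpha$. Equivalently, writing $\lfloor\cdot\rfloor$ for the floor function, one has the classical closed form
$$
|s_{\alpha,\rho}(p)\ldots s_{\alpha,\rho}(q)|_0=\lfloor \rho+(q+1)\alpha\rfloor-\lfloor \rho+p\alpha\rfloor.
$$
From this the balance between the two words at positions $p,\ldots,q$ is
$$
\bigl(\lfloor \rho+(q+1)\alpha\rfloor-\lfloor \rho'+(q+1)\alpha\rfloor\bigr)-\bigl(\lfloor \rho+p\alpha\rfloor-\lfloor \rho'+p\alpha\rfloor\bigr),
$$
and each of the two parenthesized differences is of the form $\lfloor x+\delta\rfloor-\lfloor x\rfloor$ with $x=\rho'+k\alpha$ and $\delta=\rho-\rho'\in[-1,1]$, hence lies in $\{-1,0,1\}$; actually, once one fixes a sign for $\delta$, say $\delta\in[0,1)$ (WLOG by symmetry, relabelling $\rho\leftrightarrow\rho'$ and negating the balance), each difference lies in $\{0,1\}$. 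Therefore the balance is a difference of two numbers each in $\{0,1\}$, so its absolute value is at most $1$, and taking the supremum over $p\leq q$ gives $d(s_{\alpha,\rho},s_{\alpha,\rho'})\leq 1$.

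The main obstacle is simply getting the bookkeeping of the half-open Sturmian convention exactly right: the floor-function identity above must be checked against the precise definition $s_{\alpha,\rho}(n)=1\Leftrightarrow(\rho+n\alpha)\bmod 1\in[0,1-\alpha)$ given in the paper (in particular whether $0$-positions correspond to $\lfloor\rho+(n+1)\alpha\rfloor>\lfloor\rho+n\alpha\rfloor$ or the reverse), and one should double-check the boundary cases where $\rho+k\alpha$ is an integer so that no off-by-one creeps in. I also need to handle the degenerate slopes $\alpha\in\{0,1\}$ (the constant words) separately, though there the statement is immediate since the words literally coincide. Once the identity is pinned down, the rest is the two-line argument that a telescoping difference of quantities in $\{0,1\}$ has absolute value at most $1$, and the reduction of general slopes to $[0,1]$ via letter exchange as noted above.
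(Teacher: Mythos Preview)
Your argument is correct and takes a genuinely different route from the paper. The paper's proof invokes two classical facts about Sturmian words: first, that two Sturmian words of the same slope have the same set of finite factors, so that any factor of $v$ at positions $p,\ldots,q$ is also a factor of $u$ at some (possibly different) position; second, that Sturmian words are $1$-balanced, meaning any two factors of $u$ of the same length have $0$-counts differing by at most one. Combining these gives $d(u,v)\leq 1$ in two lines, but at the cost of citing those results.

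Your approach instead unpacks the definition via the closed form $|s_{\alpha,\rho}(p)\ldots s_{\alpha,\rho}(q)|_0=\lfloor\rho+(q+1)\alpha\rfloor-\lfloor\rho+p\alpha\rfloor$ and reduces the balance to a difference of two terms of the form $\lfloor x+\delta\rfloor-\lfloor x\rfloor\in\{0,1\}$. This is more computational but entirely self-contained: it does not presuppose the language or balance theorems, and in fact recovers a sharp form of them as a byproduct. The bookkeeping you flag (the half-open convention and the integer boundary cases) does work out with the paper's definition, so the only real cost is the verification you already anticipate. Either proof is fine; yours is more elementary, the paper's is terser.
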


\begin{proof}
Two sturmian words $u$ and $v$ with equal slopes are known to have the same finite factors.
Any two factors of respectively $u$ and $v$ which begin and start at the same positions are thus also factors of $u$ only - at different position but with the same number of letters.
This yields the bound
$$
d(u,v)\leq\sup_{p,q,r}\left||u(p)u(p+1)\ldots u(p+r)|_0-|v(q)v(q+1)\ldots v(q+r)|_0\right|.
$$
This bound is known to be at most one for Sturmian words (and only them).
\end{proof}

Let us now show that $S'_A$ is sofic when $A$ is recursively closed.
We shall use the following lemma:

\begin{lemma}\label{lem:alternated_replacements}
Two words in $\{0,1\}^{\mathbb{Z}}$ are at distance at most one if and only if each can be obtained from the other by performing letter replacements $0\to 1$ or $1\to 0$, without two consecutive replacements of the same type.
\end{lemma}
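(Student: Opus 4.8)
The plan is to prove both directions of the equivalence, working directly with the metric $d$ on bi-infinite words. Recall that $d(u,v)=\sup_{p\leq q}\bigl||u(p)\ldots u(q)|_0-|v(p)\ldots v(q)|_0\bigr|$. For a pair $(u,v)$, define the partial balance function $\delta(p,q):=|u(p)\ldots u(q)|_0-|v(p)\ldots v(q)|_0$; then $d(u,v)\leq 1$ exactly means $\delta(p,q)\in\{-1,0,1\}$ for all $p\leq q$. It will be convenient to fix one position and track the running difference $D(k)$ between the number of $0$'s in $u$ and in $v$ on an interval ending (or starting) at $k$; the condition $d(u,v)\leq 1$ then says that $D$ never varies by more than $1$ over any sub-interval, i.e. the set of values taken by a suitable prefix-difference sequence has diameter at most $1$.

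For the \emph{if} direction, suppose $v$ is obtained from $u$ by a sequence of replacements $0\to 1$ and $1\to 0$ in which no two consecutive replacements (in position order) are of the same type. I would show $d(u,v)\leq 1$ by examining any factor window $[p,q]$: the replacements occurring inside the window, read left to right, alternate between the two types, so their contributions to $|u(p)\ldots u(q)|_0-|v(p)\ldots v(q)|_0$ are alternately $+1$ and $-1$ (a $0\to 1$ replacement removes a $0$, a $1\to 0$ adds one). A telescoping alternating sum of $\pm 1$'s starting from either sign has partial sums, hence total, in $\{-1,0,1\}$; since the window's total balance equals this alternating sum, $|\delta(p,q)|\leq 1$. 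Symmetrically $u$ is obtained from $v$ by the reverse replacements, which also alternate, so the relation is symmetric and the stated ``each can be obtained from the other'' holds automatically.

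For the \emph{only if} direction, assume $d(u,v)\leq 1$ and let $P=\{k\in\mathbb{Z} : u(k)\neq v(k)\}$ be the set of positions where the words differ; these are exactly the positions that must be ``replaced'' to turn $u$ into $v$. At such a position either $u(k)=0,v(k)=1$ (type $0\to1$) or $u(k)=1,v(k)=0$ (type $1\to0$). The claim reduces to: consecutive elements of $P$ (with respect to the natural order on $\mathbb{Z}$) are of opposite type. Suppose for contradiction that $k<\ell$ are two consecutive elements of $P$ of the same type, say both $0\to1$ (the other case is symmetric, or follows by exchanging the roles of $0$ and $1$). On the window $[k,\ell]$, positions $k$ and $\ell$ each contribute $+1$ to $\delta(k,\ell)$ and every other position in the window agrees in $u$ and $v$, so $\delta(k,\ell)=2$, contradicting $d(u,v)\leq1$. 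Hence the nonagreeing positions, listed in order, strictly alternate in type, which is precisely the asserted replacement scheme; and as noted above the construction is manifestly symmetric in $u$ and $v$.

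The only genuine subtlety, and the step I would write most carefully, is the bi-infinite bookkeeping: $P$ may be infinite and unbounded in one or both directions, and there is no canonical ``first'' replacement, so one must phrase the alternation purely as a local condition on consecutive elements of $P$ and check that this local condition is both necessary (the contradiction argument above, which is local and so unaffected by infiniteness) and sufficient (the alternating-partial-sum argument, which only ever looks at finitely many replacements inside a fixed finite window $[p,q]$). Once the statement is localized to windows in this way, infiniteness causes no trouble, and the degenerate cases — $P$ empty (then $u=v$ and $d=0$), or $P$ a singleton (then $d=1$ trivially) — are immediate.
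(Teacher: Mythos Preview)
Your proposal is correct and follows essentially the same approach as the paper: for the ``only if'' direction you derive a contradiction by looking at the window between two consecutive same-type replacements (balance $\pm 2$), and for the ``if'' direction you observe that an alternating sequence of $\pm 1$ contributions inside any window sums to $0$ or $\pm 1$. Your write-up is somewhat more explicit about the bi-infinite bookkeeping and degenerate cases, but the argument is the same.
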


\begin{proof}
Let $u$ and $v$ in $\{0,1\}^{\mathbb{Z}}$ at distance at most one.
Performing on $u$ replacements at each position $i$ where $u(i)\neq v(i)$ yields $v$.
If two consecutive replacements, say at position $p$ and $q$, have the same type, then the balance between $u(p)\ldots u(q)$ and $v(p)\ldots v(q)$ is two, hence $d(u,v)\geq 2$.
The type of replacements thus necessarily alternates.\\
Conversely, assume that $v\in\{0,1\}^{\mathbb{Z}}$ is obtained from $u\in\{0,1\}^{\mathbb{Z}}$ by per\-for\-ming replacements whose type alternates.
Given $p\leq q$, consider the number of replacements between positions $p$ and $q$: the balance between $u(p)\ldots u(q)$ and $v(p)\ldots v(q)$ is $0$ if this number is even, $1$ otherwise, hence $d(u,v)\leq 1$.
\end{proof}

Since the replacements to transform $u$ in $v$ alternate, their sequence can be encoded by $w\in\{0,1\}^{\mathbb{Z}}$: reading $01$ (resp. $10$) at position $i$ means that a replacement $0\to 1$ (resp $1\to 0$) occurs at position $i$.
Such a word $w$ is moreover unique, except if $u=v$ in which case both $w=0^{\mathbb{Z}}$ and $w=1^{\mathbb{Z}}$ suit.
Figure \ref{fig:quasisturmian} illustrates this.
We use this coding in the next proposition.\\

\begin{figure}[hbtp]
  \centering
  \includegraphics[width=\textwidth]{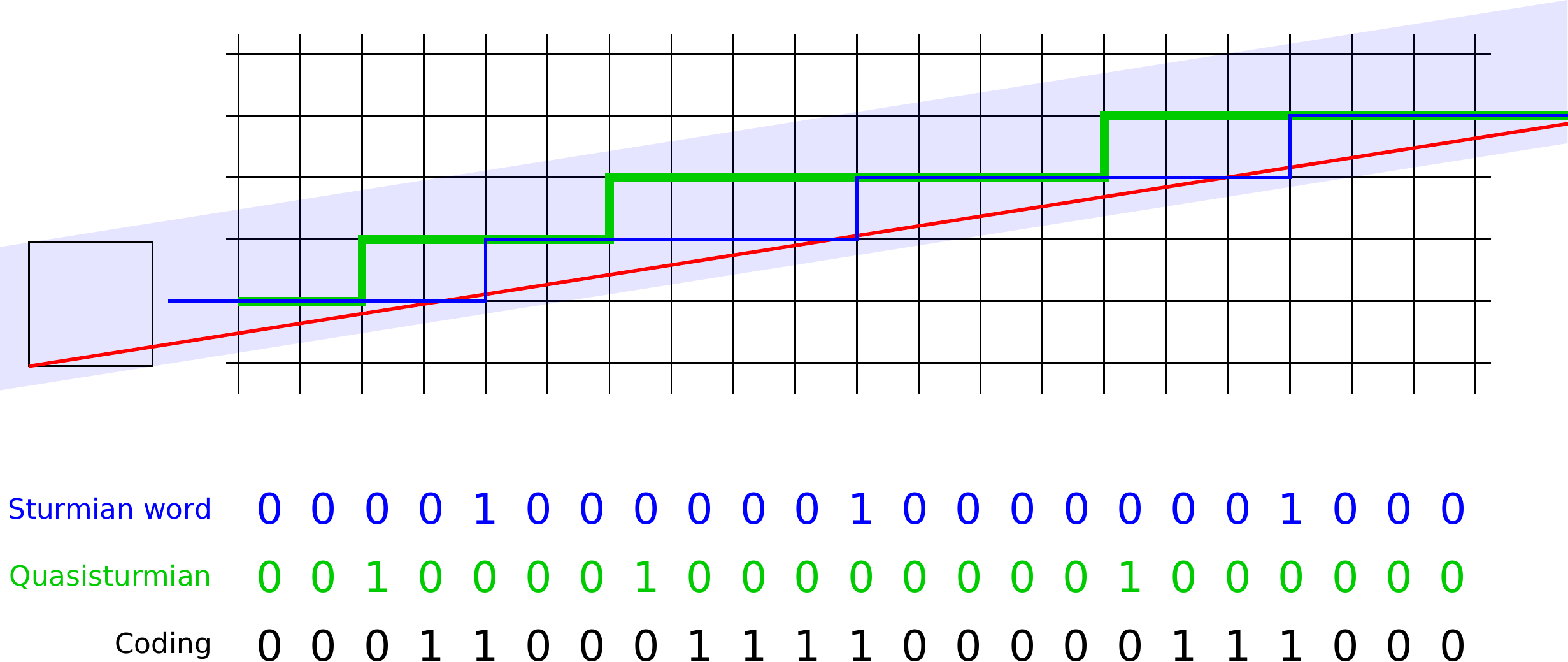}
  \caption{A Sturmian word, a quasisturmian word with the same slope, their codings and the coding of the transformation from the former to the latter.}
\label{fig:quasisturmian}
\end{figure}

\begin{proposition}\label{prop:recursive_subshift}
If $A$ is a recursively closed set, then $S'_A$ is a sofic subshift.
\end{proposition}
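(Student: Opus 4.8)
The plan is to present $S'_A$ as a letter-to-letter factor of a finite type subshift obtained by overlaying, on top of the Sturmian subshift $S_A$ (which is sofic for $A$ recursively closed: combine Proposition~\ref{prop:sofic_sturmians_computable_slopes} with Theorem~\ref{th:subaction}), a single extra layer over the alphabet $\{0,1\}$. The extra layer will carry, row by row, the coding word of the transformation from the underlying Sturmian word to the quasisturmian word one wants to read off, exactly as pictured in Figure~\ref{fig:quasisturmian}; a window-two local rule will force this coding to be consistent with the Sturmian word below it, and the factor map will then output, in each row, the word obtained from the Sturmian word by the flips dictated by the coding. Since sofic subshifts are stable under finite type overlays and letter-to-letter factors (a standard recoding, of the same flavour as what underlies Theorem~\ref{th:subaction}), this will give soficity of $S'_A$.

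Concretely, I would first fix, since $S_A$ is sofic, a finite type subshift $Z$ over some alphabet $\mathcal{B}$ together with a $1$-block factor map $g\colon\mathcal{B}\to\{0,1\}$ such that $g(Z)=S_A$. On the alphabet $\mathcal{B}\times\{0,1\}$ I would define the subshift $Y$ of configurations $(b,w)$ such that $b\in Z$ and, for every two horizontally adjacent cells $(m,i)$ and $(m,i+1)$, if $w(m,i)\neq w(m,i+1)$ then $g(b(m,i))=w(m,i)$. Both conditions are of finite type, so $Y$ is a finite type subshift. I would then consider the sliding block map $h$ of window $\{(m,i),(m,i+1)\}$ sending $(b,w)$ to the configuration whose entry at $(m,i)$ is $g(b(m,i))$ if $w(m,i)=w(m,i+1)$ and $1-g(b(m,i))$ otherwise. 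Then $h(Y)$ is sofic, and the whole proposition reduces to the identity $h(Y)=S'_A$.

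For the inclusion $h(Y)\subseteq S'_A$, read the local rule along a fixed row $m$, writing $u=g(b(m,\cdotp))$: the cells where $h(b,w)$ disagrees with $u$ are exactly the positions $i$ at which $w(m,\cdotp)$ has an edge, a $01$-edge sitting only where $u$ reads $0$ (hence coding a replacement $0\to1$) and a $10$-edge only where $u$ reads $1$ (coding a replacement $1\to0$); since the edges of a bi-infinite binary word necessarily alternate between these two types, these replacements alternate in type, so by the converse direction of Lemma~\ref{lem:alternated_replacements} the row $h(b,w)(m,\cdotp)$ is at distance at most one from $u$. As $b\in Z$ forces $g(b)\in S_A$, the word $u$ equals one and the same $s_{\alpha,\rho}$ with $\alpha\in A$ for every row $m$, so $h(b,w)\in S'_A$. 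For the reverse inclusion, given $x\in S'_A$ with witnesses $\alpha\in A$ and $\rho$, I would pick $b\in Z$ with $g(b)$ the configuration all of whose rows equal $s_{\alpha,\rho}$ (possible since that configuration lies in $S_A=g(Z)$), and, for each row $m$, let $w(m,\cdotp)$ be the coding word of an alternating replacement sequence turning $s_{\alpha,\rho}$ into $x(m,\cdotp)$; such a sequence exists by the direct direction of Lemma~\ref{lem:alternated_replacements}, and it is coded by some $w(m,\cdotp)\in\{0,1\}^{\mathbb{Z}}$ by the remark following that lemma (take $w(m,\cdotp)=0^{\mathbb{Z}}$ in the exceptional case where the two rows coincide). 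One then checks directly that $(b,w)\in Y$ and $h(b,w)=x$.

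The step I expect to require the most care is the design of the local rule defining $Y$, and in particular the clause forcing $g(b(m,i))=w(m,i)$ at the edges of $w$. Without it, $h(Y)$ would be strictly larger than $S'_A$: the type of a flip ($0\to1$ versus $1\to0$) is dictated by the Sturmian letter below it, not by $w$ alone, so two edges of $w$ placed at positions both carrying the Sturmian letter $0$ would produce a row at distance $2$ from the Sturmian word. The automatic alternation of edge types in a binary word is precisely what makes the clause ``$g(b(m,i))=w(m,i)$ at edges'' equivalent to ``the flips alternate in type'', which is the hypothesis of Lemma~\ref{lem:alternated_replacements}; getting this correspondence exactly right, together with the harmless $w=0^{\mathbb{Z}}$ convention for unchanged rows, is the only genuinely delicate point, everything else being routine manipulation of block codes.
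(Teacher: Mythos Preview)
Your proposal is correct and follows essentially the same route as the paper: the paper also takes a finite type extension of $S_A$, adjoins one extra $\{0,1\}$-layer carrying the coding word of Lemma~\ref{lem:alternated_replacements}, imposes the very same local constraint (your clause ``$g(b(m,i))=w(m,i)$ at edges of $w$'' is exactly the paper's pair of implications on $\pi_3$), and uses the identical window-two block map to read off the quasisturmian rows. The only cosmetic difference is that the paper writes its base alphabet as $\{0,1\}\times B$ rather than an abstract $\mathcal{B}$ with a factor $g$, and your opening sentence says ``letter-to-letter'' while your actual $h$ (like the paper's $\pi$) has horizontal radius one---but you handle this correctly by invoking the standard higher-block recoding.
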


\begin{proof}
  Let $\pi_{i_1,\ldots,i_k}$ denotes the map which selects the entries $i_1,\ldots,i_k$ of a tuple.  
Since $S_A$ is sofic, there is an alphabet $B$ and a $2$-dim. subshift of finite type $\tilde{S}_A$ over $(\{0,1\} \times B)$ such that
$$
S_A=\pi_1(\tilde{S}_A).
$$
We shall now prove that $S'_A$ is sofic by defining a $2$-dim. subshift of finite type $\tilde{S}'_A$ and a factor map $\pi$ from $\tilde{S}'_A$ onto $S'_A$.
The idea is to add to the configurations of $\tilde{S}_A$ a third entry that will encode (through Lemma~\ref{lem:alternated_replacements}) the difference between the Sturmian words on their first entry and the quasisturmian words on the rows of $S'_A$.
Formally, let $\tilde{S}'_A$ be the $2$-dim. subshift over $(\{0,1\}\times B\times \{0,1\})$ such that $u\in\tilde{S}'_A$ if and only if $\pi_{1,2}(u)\in \tilde{S}_A$ and, for any $(m,n)\in\mathbb{Z}^2$:
$$
\pi_3(u(m,n))<\pi_3(u(m,n+1)) ~\Rightarrow~ \pi_1(u(m,n))=0,
$$
$$
\pi_3(u(m,n))>\pi_3(u(m,n+1)) ~\Rightarrow~ \pi_1(u(m,n))=1.
$$
The subshift $\tilde{S}'_A$ is of finite type because so does $\tilde{S}_A$ and the third entry in a given position of a configuration only depends on the neighboor positions.
Now, let $\pi$ be the factor map defined on $\tilde{S}'_A$ by
$$
\pi(u)(m,n)=\left\{\begin{array}{cl}
\pi_1(u(m,n)) & \textrm{if }\pi_3(u(m,n))=\pi_3(u(m,n+1)),\\
1-\pi_1(u(m,n)) & \textrm{otherwise.}
\end{array}\right.
$$
First, let us show that $\pi(\tilde{S}'_A)\subset S'_A$.
Let $\tilde{u}\in\tilde{S}'_A$ and fix $m\in\mathbb{Z}$.
By definition of $\tilde{S}'_A$ and $\tilde{S}_A$, $\pi_1(\tilde{u}(m,\cdotp))=s_{\alpha,\rho}$.
One thus also has $\pi(\tilde{u}(m,\cdotp))=s_{\alpha,\rho}$, except at each position $n$ such that the two bits $\pi_3(\tilde{u}(m,n))$ and $\pi_3(\tilde{u}(m,n+1))$ differ.
At these positions, $\pi(\tilde{u}(m,\cdotp))$ is obtained by performing on $s_{\alpha,\rho}$ a replacement of type $\pi_3(\tilde{u}(m,n))\to \pi_3(\tilde{u}(m,n+1))$.
The type of these replacements alternate - as the bit runs do - and Lemma~\ref{lem:alternated_replacements} yields $d(\pi(\tilde{u}(m,\cdotp)),s_{\alpha,\rho})\leq 1$.
This shows that $\pi(\tilde{u})$ is in $S'_A$.
Hence, $\pi(\tilde{S}'_A)\subset S'_A$.\\
Now, let us show that $S'_A\subset\pi(\tilde{S}'_A)$.
Let $u\in S'_A$. Fix $m\in\mathbb{Z}$ and choose $\tilde{v}\in \tilde{S}_A$ such that $\pi_1(\tilde{v}(m,\cdotp))=s_{\alpha,0}$. By definition, $d(u(m,\cdotp),s_{\alpha,0})\leq 1$, so we can consider $w_m$  the coding of the replacements which transform $s_{\alpha,0}$ into $u(m,\cdotp)$.
Consider $\tilde{u}\in (\{0,1\}\times B\times \{0,1\})^{\mathbb{Z}^2}$ defined by $\tilde{u}(m,i)=(\tilde{v}(m,i),w_m(i))$.
The way $\pi$ has been defined yields $\tilde{u}\in\tilde{S}'_A$ and $\pi(\tilde{u})=u$.
Hence, $S'_A\subset \pi(\tilde{S}'_A)$.
\end{proof}

\subsection{Summary}

For the sake of clarity, it can be useful to end this section by briefly summarizing the numerous subshifts introduced in the previous subsections.\\

We initially associated with any set $\mathcal{S}$ of $2$-planes in $\mathbb{R}^3$ the subshifts $S^i_{\mathcal{S}}$, whose configurations are projections of $3\to 2$ tilings with slope in $\mathcal{S}$.
The lines of a configuration in this subshift are Sturmian words with the same slope, but the intercept varies in a non-trivial way from line to line, according to the slope of the projected $3\to 2$ tiling.
We conjectured that such a subshift is sofic when $\mathcal{S}$ is recursively enumerable, but because we have no proof of this we introduced larger subshifts whose soficity can be proved.\\

The first idea was to force the lines of each configuration to have all the same intercept.
This lead to introduce (Eq.~\ref{eq:S_A}):
$$
S_A=\{x\in\{0,1\}^{\mathbb{Z}^2}~:~\exists \alpha\in A,~\exists \rho\in[0,1],~\forall m\in\mathbb{Z},~x(m,\cdotp)=s_{\alpha,\rho}\},
$$
where $A$ is a set of slopes characterized by $\mathcal{S}$.
This subshift was easily proven to be sofic when $\mathcal{S}$ is recursively enumerable : this is a rather straightforward corollary of Theorem~\ref{th:subaction}.
But it does not contain $S^i_\mathcal{S}$: the constraints on the intercept of lines are too strict.\\

We therefore tried to relax the constraints on the intercept.
Namely, we allowed the lines of configurations to be quasisturmian words (Eq.~\ref{eq:S'_A}):
$$
S'_A=\{x\in\{0,1\}^{\mathbb{Z}^2}~:~\exists \alpha\in A,~\exists \rho\in[0,1],~\forall m\in\mathbb{Z},~d(x(m,\cdotp),s_{\alpha,\rho})\leq 1\}.
$$
In order to emphasize that $S'_A$ is a relaxation of $S_A$, one can equivalently write:
$$
S'_A=\{x\in\{0,1\}^{\mathbb{Z}^2}~:~\exists y\in S_A,~\forall m\in\mathbb{Z},~d(x(m,\cdotp),y(m,\cdotp))\leq 1\}.
$$
We proved that this allows the intercept to vary freely (Prop.~\ref{prop:parallel_sturmian}), so that $S'_A$ contains $S^i_\mathcal{S}$ (and, of course, $S_A$).
Moreover, we used that $S_A$ is sofic (when $\mathcal{S}$ is recursively enumerable) to obtain that $S'_A$ is also sofic (Prop.~\ref{prop:recursive_subshift}).
This is the result that shall be used in the next section.\\

Note that $S'_A$ is the relaxation of $S_A$ but not of $S^i_{\mathcal{S}}$ (as pointed out to us by Emmanuel Jeandel).
Note also that it could seem more natural, instead of introducting quasisturmian words, to simply allow the intercept to freely vary on each line, that is, to replace $S_A'$ by the subshift
$$
S''_A=\{x\in\{0,1\}^{\mathbb{Z}^2}~:~\exists \alpha\in A,~\forall m\in\mathbb{Z},~\exists \rho\in[0,1],~x(m,\cdotp)=s_{\alpha,\rho}\}.
$$
However, this latter subshift is (generally) not sofic:

\begin{proposition}\label{prop:sturmian_lines_non_sofic}
  If $A\not\subset \mathbb{Q}$, then the subshift $S''_A$ is not sofic.
\end{proposition}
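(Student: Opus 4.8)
The statement to prove is that $S''_A$ is not sofic whenever $A$ is not contained in $\mathbb{Q}$. The natural strategy is a pumping/gluing argument: sofic subshifts are factors of subshifts of finite type, and SFTs enjoy a strong local "cut-and-paste" property — if a large rectangular pattern is legal, then the behavior on its left half and its right half are essentially independent once they are separated by a strip wider than the interaction range. Under the factor map this translates into: if $u$ and $u'$ are two configurations of $S''_A$ that agree on a wide enough vertical strip, one can build a new configuration of $S''_A$ whose left part comes from $u$ and whose right part comes from $u'$. The idea is then to exhibit two configurations of $S''_A$ which cannot be glued in this way, contradicting soficity.

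\textbf{Carrying it out.} First I would fix an irrational $\alpha\in A$ (this is exactly where the hypothesis $A\not\subset\mathbb{Q}$ is used). For such $\alpha$, a Sturmian word $s_{\alpha,\rho}$ is uniquely determined, up to finitely many positions, by the frequency $\alpha$ of the letter $1$ on \emph{arbitrarily long} factors: the discrepancy $\bigl||s_{\alpha,\rho}(p)\cdots s_{\alpha,\rho}(q)|_1-(q-p+1)\alpha\bigr|$ stays bounded, and for irrational $\alpha$ no other slope is compatible with a long factor. Consequently, in any configuration $x\in S''_A$ with this slope, \emph{every} row has letter-$1$ frequency exactly $\alpha$ over long windows; in particular two rows at vertical distance $1$ can differ in density by at most an $o(N)$ amount over a window of length $N$. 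Now I would take two configurations $u,u'\in S''_A$ both realizing the slope $\alpha$ but chosen so that, on a long horizontal window $[0,N]$, the intercepts of \emph{consecutive} rows drift in incompatible directions — concretely, arrange $u$ so that its rows near height $0$ are all (approximate) copies of $s_{\alpha,0}$ and $u'$ so that its rows near height $0$ sweep through a wide range of intercepts as the height varies (each single row is still a legitimate Sturmian word of slope $\alpha$, which is why $S''_A$, unlike $S_A$, contains both). The key quantitative point: in $S''_A$ there is \emph{no} bound linking the intercept of row $m$ to that of row $m+1$, but because the factor comes from an SFT, a bounded-range consistency must nevertheless be enforceable — and I would derive a contradiction by showing that gluing the left half of $u$ to the right half of $u'$ produces, on a tall thin rectangle far to the right, a column of rows whose intercepts are forced (by the left half, through the finite interaction range of the SFT) to lie near $0$, while the right half forces them to be spread out, yet the combined object would still have to be a valid image of an SFT configuration.

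\textbf{The main obstacle.} The delicate part is making the independence of "intercept per row" incompatible with SFT-gluing rigorous, since a single row in $S''_A$ really does get to pick its intercept freely, so I cannot get a contradiction from one row alone. The contradiction must come from the interaction between the \emph{rigidity} of an irrational Sturmian slope on long horizontal factors (forced by $\alpha\notin\mathbb{Q}$) and the \emph{freedom} of intercepts vertically: an SFT factoring onto $S''_A$ would have to track, in a bounded-memory way along the horizontal direction, enough information to certify that every row is Sturmian of slope exactly $\alpha$, and I expect a counting/entropy argument — the number of distinct intercept profiles over a height-$h$ column is uncountable, or at least grows too fast to be generated by a finite-state mechanism of fixed range — to be the cleanest way to close it. So the real work is: (i) prove the horizontal rigidity lemma for irrational $\alpha$; (ii) prove an SFT-gluing lemma in the vertical direction; (iii) combine them so that the gluing produces a configuration that is \emph{not} in $S''_A$ because some row fails to be Sturmian of the right slope, contradicting that the factor map lands in $S''_A$. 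Step (iii), reconciling these two lemmas into an honest contradiction, is where I anticipate the bulk of the difficulty.
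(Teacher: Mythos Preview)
Your gluing plan has a structural gap. The claim ``if $u,u'\in S''_A$ agree on a wide vertical strip, then their halves can be spliced and the result lies in $S''_A$'' is false for sofic subshifts in general (the even shift already fails it in dimension one). What \emph{is} true is the corresponding statement in the SFT cover: preimages agreeing on a strip of width at least the interaction range can be glued. But then you must exhibit two preimages that agree on such a strip, and nothing in your plan produces them. Worse, your specific choice of $u$ (all rows with intercept $0$) and $u'$ (rows sweeping a wide range of intercepts) cannot even agree on a wide vertical strip in $S''_A$ itself: for irrational $\alpha$, a length-$N$ factor of $s_{\alpha,\rho}$ confines $\rho$ to an interval of length $O(1/N)$, so if row $m$ of $u$ and of $u'$ coincide on $[0,N]$ their intercepts are $O(1/N)$-close, contradicting the ``wide range'' requirement. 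Step (iii) is therefore not merely delicate --- the ingredients of steps (i)--(ii) simply do not combine into a contradiction.

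The paper takes the entropy route you mention only parenthetically. One first observes that $S''_A$ has zero topological entropy (each row is one of at most $cn^3$ Sturmian factors of length $n$, giving at most $(cn^3)^n$ patterns of size $n\times n$). On the other hand, the vertical freedom of intercepts yields, for each $n$, a configuration $x\in S''_A$ in which every $d\times d$ window contains at least $n^n$ pairwise disjoint, pairwise distinct $n\times n$ patterns. Now if $S''_A=\pi(X)$ with $X$ an SFT of range $k$ over alphabet $B$, there are only $|B|^{O(n)}$ possible $k$-coronas around an $n\times n$ block; since $n^n$ eventually dominates $|B|^{O(n)}$, pigeonhole gives two disjoint $n\times n$ blocks in the preimage with identical coronas but distinct $\pi$-images. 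Swapping them independently in each cell of an $m\times m$ grid of $d\times d$ blocks produces $2^{m^2}$ legal $md\times md$ patterns in $X$, hence in $S''_A$, forcing positive entropy --- a contradiction. This corona-matching swap is exactly the mechanism your gluing argument is missing: it exploits finiteness of the SFT cover without ever requiring two full configurations, or their preimages, to agree on a strip.
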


A proof of this result, which is not used later in the paper, is given in Appendix~\ref{sec:appendix}.

\section{Weak colored local rules}
\label{sec:weak_colored_rules}

\subsection{Any or all of the $2$-planes in $\mathbb{R}^3$}

We shall here prove the following:

\begin{proposition}\label{prop:weak_local_rules_one_plane}
Any computable $2$-plane in $\mathbb{R}^3$ has weak colored local rules of thickness $2$.
\end{proposition}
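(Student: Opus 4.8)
The goal is to realize any computable $2$-plane $E\subset\mathbb{R}^3$ as the slope of a planar tiling space of thickness $2$ defined by colored local rules. The strategy is to transfer the problem to the symbolic setting via the projection described in Section~3.2, exploit the soficity of $S'_A$ from Proposition~\ref{prop:recursive_subshift}, and then translate soficity of a subshift back into colored local rules for tilings.

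First I would set up the correspondence between $3\to 2$ tilings and configurations. A $3\to 2$ tiling with slope $E$ of normal vector $(\alpha,\beta,\gamma)$ projects (ignoring the $\vec e_1,\vec e_2$ tiles) along $\vec e_1+\vec e_2$ to a configuration in $\{1,2\}^{\mathbb{Z}^2}$ whose rows are Sturmian words of slope $|\alpha/\beta|$ with varying intercepts, as explained before Proposition~\ref{prop:parallel_sturmian}. Since $E$ is computable, the singleton $\{E\}$ is recursively closed, so $A=\{|\alpha/\beta|\}$ is a recursively closed subset of $\mathbb{R}^+$ (a computable real). By Proposition~\ref{prop:recursive_subshift}, $S'_A$ is a sofic $2$-dimensional subshift over $\{0,1\}$: there is a finite-type cover $\tilde S'_A$ over an alphabet $\Sigma$ and a letter-to-letter factor map onto $S'_A$. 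The discussion around Proposition~\ref{prop:parallel_sturmian} already records that the configuration of any thickness-$1$ planar $3\to 2$ tiling with slope parallel to $E$ lies in $S'_A$, because Sturmian words of equal slope are pairwise at distance at most one.

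Next I would go back from configurations to tilings. A configuration in $\{1,2\}^{\mathbb{Z}^2}$ corresponds to a $3\to 2$ tiling precisely when the three configurations obtained by permuting the roles of $\vec e_1,\vec e_2,\vec e_3$ are mutually consistent; these consistency conditions are local (face-to-face matching), so they are expressible by finitely many forbidden patches. I would take the colored tiles to carry, on each tile, the letter of $\Sigma$ coming from $\tilde S'_A$ together with the bookkeeping needed to witness the three projections simultaneously; the finite-type rules of $\tilde S'_A$ become colored local rules on the tiles, and erasing the $\Sigma$-decoration is the color-removal factor map. One then checks the two defining conditions of weak colored local rules: (i) every thickness-$1$ planar tiling with slope parallel to $E$ admits a lift of its projection into $S'_A$, hence into $\tilde S'_A$, hence a coloring satisfying the rules — so $X_{\mathcal F}$ contains a thickness-$1$ (a fortiori $\le 2$) planar tiling of slope parallel to $E$; and (ii) conversely, a colored tiling satisfying the rules has each of its three projected configurations with all rows quasisturmian of slope $|\alpha/\beta|$ (the correct ratio, since $A$ is the singleton), and a bounded balance between consecutive rows forces the lift to stay within a bounded tube around $E$ — a thickness-$2$ tube, the ``$1$'' of the Sturmian lift plus the ``$\le 1$'' wandering allowed by the quasisturmian relaxation. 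This is why the thickness jumps to $2$ and the rules are weak rather than strong.

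\textbf{Main obstacle.} The delicate point is step (ii): turning ``every row of every projection is a quasisturmian word of slope $|\alpha/\beta|$'' into ``the lift sits in $E+[0,2]^3$''. One must argue that quasisturmian-ness of all rows in all three directions, taken together, pins the global lift to within bounded distance of a single affine plane parallel to $E$ — i.e. that the $\le 1$ row-wise deviations cannot accumulate across rows to produce unbounded drift, and that having the right slope in each of the three $\{1,2\}^{\mathbb{Z}^2}$-projections determines $E$ up to translation. Controlling this interaction between the three projections, and extracting the uniform thickness bound $t=2$ from the ``distance at most one'' estimates, is where the real work lies; the rest is assembling finitely many local rules and invoking Theorem~\ref{th:subaction} through Proposition~\ref{prop:recursive_subshift}.
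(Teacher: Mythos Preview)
There is a genuine gap. Your plan imposes a sofic constraint on only \emph{one} projection, with $A=\{|\alpha/\beta|\}$, and then asserts that ``each of its three projected configurations [has] all rows quasisturmian of slope $|\alpha/\beta|$''. This is false: the three projections of a planar tiling with normal $(\alpha,\beta,\gamma)$ produce Sturmian rows of three \emph{different} slopes $|\alpha/\beta|$, $|\beta/\gamma|$, $|\alpha/\gamma|$, and fixing just one of these ratios leaves a one-parameter family of planes. Constraining a single projection to lie in $S'_{\{|\alpha/\beta|\}}$ therefore cannot pin down $E$; the ``bookkeeping needed to witness the three projections'' carries no slope constraint on the other two. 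The paper handles this by normalizing the normal to $(1,\alpha,\beta)$, building \emph{two} independent sofic tile sets---one enforcing $S'_{\alpha}$ on the projection along $\vec e_1+\vec e_2$ and one enforcing $S'_{\beta}$ on the projection along $\vec e_1+\vec e_3$---and then taking their Cartesian product on common tile shapes.

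Two further pieces of your outline are also incomplete. First, you need a concrete mechanism to convert a Wang-tile presentation of the 2D sofic shift into colored $3\to 2$ rhombus tiles; the paper does this by \emph{shearing} the Wang squares into type-$1$ and type-$2$ rhombi and adjoining type-$3$ ``transfer'' tiles that carry decorations across the direction collapsed by the projection. Second, the thickness bound you correctly flag as the main obstacle is not obtained by arguing that row-wise deviations do not accumulate; rather, the paper uses a direct \emph{ribbon argument}: any two vertices $x,y$ of the tiling can be joined by a path along one $\vec v_i$-ribbon followed by one $\vec v_j$-ribbon ($i\neq j$), each ribbon is quasisturmian and hence lifts into a tube $X+\mathbb{R}\vec u+[0,1]\times[0,1]\times[0,2]$ around a line of $E$, and summing the two tube widths bounds $x'-y'$ modulo $E$ by an explicit box. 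Intersecting the three bounds obtained from the three choices of ribbon pairs yields the cube $[0,2]^3$, giving thickness $2$.
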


\begin{proof}
Consider a $2$-plane in $\mathbb{R}^3$ with a computable normal vector $(1,\alpha,\beta)$.
Prop.~\ref{prop:recursive_subshift} ensures that $S'_{\alpha}$ is sofic (for the sake of simplicity, $S'_{\alpha}$ stands for $S'_{\{\alpha\}}$).
Let us see this subshift as the tiling space of a Wang tile set (recall that Wang tiles are square with colored edges that can be adjacent only along full edges with the same color), with the letters $1$ or $2$ being written inside the tiles.\\

Let us shear these square tiles along $\vec{e}_3$ to get tiles of type $1$ or $2$, according to the letter written inside the tiles.
We then add all the tiles of type $3$ needed to transfer along the direction $\vec{e}_3$ any decoration appearing on the $\vec{e}_1$ or $\vec{e}_2$ edges of tiles of type $1$ or $2$.
Fig.~\ref{fig:shearing_wang} illustrates this.
This yields a tile set which exactly forms the $3\to 2$ tilings whose orthogonal projection along $\vec{e}_1+\vec{e}_2$ yields $S'_{\alpha}$.\\

By proceeding in the same way, we can get a second tile set which exactly form the $3\to 2$ tilings whose orthogonal projection along $\vec{e}_1+\vec{e}_3$ yields $S'_{\beta}$.
We now join these two tile sets into a single one by cartesian product: whenever a tile of the first set and one of the second set have the same shape, we define a new tile (with the same shape) and encode in a one-to-one way the colors on the edges of the two original tiles into a color on the edge of the new tile.
This yields a finite tile set which forms $3\to 2$ tilings whose associated subshifts are $S'_{\alpha}$, $S'_{\beta}$ and $S'_{\alpha/\beta}$.\\

It remains to show that these tilings stay at bounded distance from the $2$-plane with normal vector $(1,\alpha,\beta)$.
Let us call {\em $\vec{v}_i$-ribbon} of a $3\to 2$ tiling a maximal sequence of tiles, with two consecutive tiles being adjacent along an edge $\vec{v}_i$ ; such a ribbon is said to be {\em directed} by $\vec{v}_i$.
Consider two vertices $x$ and $y$ of such a tiling.
Each of them is the endpoint of edges which takes at least two different directions.
There are thus $i\neq j$ such that $x$ belongs to a $\vec{v}_i$-directed edge - hence to a $\vec{v}_i$-directed ribbon - and $y$ to a $\vec{v}_j$-directed edge - hence to a $\vec{v}_j$-directed ribbon.
For simplicity, assume $i=2$ and $j=3$.
These two ribbons intersect: let $z$ be a vertex in this intersection.
Fig.~\ref{fig:connecting_ribbons} illustrates this.
The $\vec{v}_2$-directed ribbon (resp. the $\vec{v}_3$-directed one) is a quasisturmian word with slope $\alpha$ (resp. $\beta$).
Their lifts are thus respectively contained in some tubes
$$
X+\mathbb{R}\vec{u}+[0,1]\times[0,1]\times[0,2],
$$
$$
Y+\mathbb{R}\vec{v}+[0,1]\times[0,2]\times[0,1],
$$
where $X$ and $Y$ are points of $\mathbb{R}^3$ $\vec{u}$ and $\vec{v}$ are in the $2$-plane with normal vector $(1,\alpha,\beta)$.
There are thus two reals $\lambda$ and $\mu$ such that
$$
x'-z'=\lambda\vec{u}+h_2
\qquad\textrm{and}\qquad
z'-y'=\mu\vec{v}+h_3,
$$
where $x'$, $y'$ and $z'$ denotes the lifts of $x$, $y$ et $z$, $h_2\in[0,1]\times[0,1]\times[0,2]$ and $h_3\in[0,1]\times[0,2]\times[0,1]$.
Whence
$$
x-y=\lambda\vec{u}+\mu\vec{v}+(h_2+h_3).
$$
Since $h_2+h_3\in[0,2]\times[0,3]\times[0,3]$, this shows that any two points in the lift of the tiling are contained in the slice obtained by moving the box $[0,2]\times[0,3]\times[0,3]$ on the $2$-plane with normal vector $(1,\alpha,\beta)$.
This shows that these colored local rules are weak.\\

Actually, we can do the same by considering a $\vec{v}_1$-directed ribbon instead of the $\vec{v}_2$-directed one, getting a box $[0,3]\times[0,2]\times[0,3]$, or instead of the $\vec{v}_3$-directed one, getting a box $[0,3]\times[0,3]\times[0,2]$.
Since the intersection of the slices obtained via these three different boxes is just the slice obtained via the box $[0,2]\times[0,2]\times[0,2]$.
The weak colored local rules thus have thickness $2$.
\end{proof}

\begin{figure}[hbtp]
\centering
\includegraphics[width=0.9\textwidth]{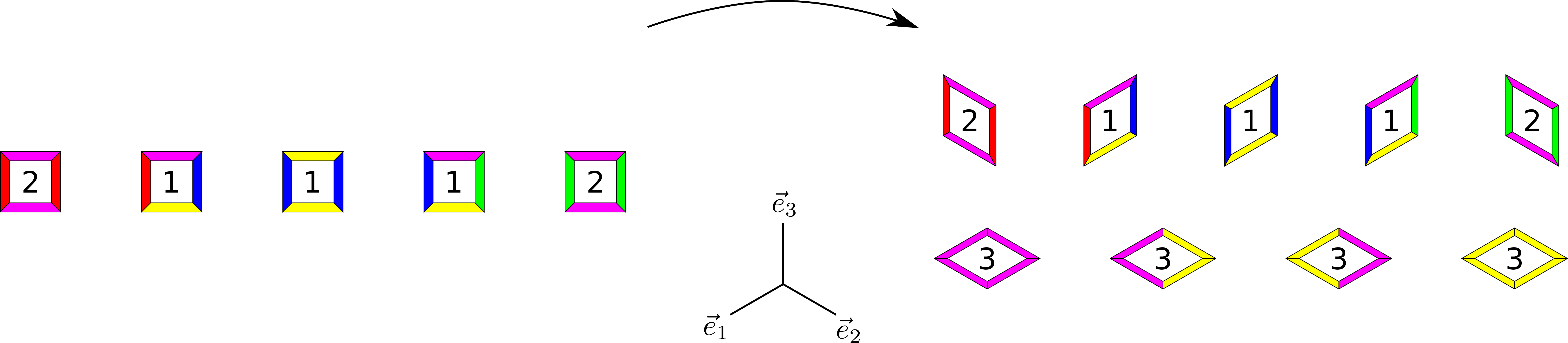}
\caption{A set of Wang tile (left) and the corresponding type $1$ or $2$ ``sheared'' tiles completed with the type $3$ ``transfer'' tiles'' (right).}
\label{fig:shearing_wang}
\end{figure}

\begin{figure}[hbtp]
\centering
\includegraphics[width=0.6\textwidth]{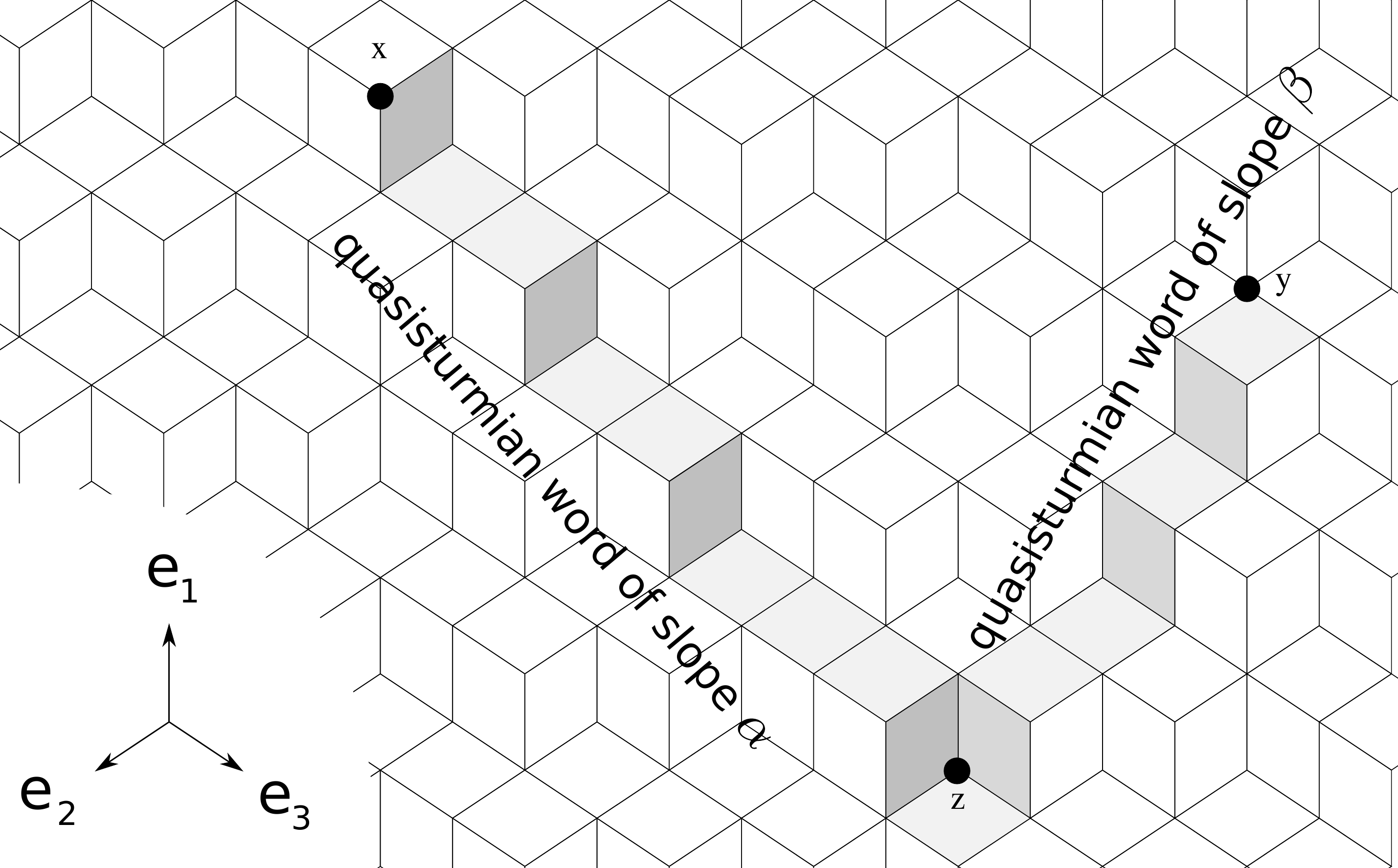}
\caption{The two points $x$ and $y$ are connected by a path made of two ribbons.}
\label{fig:connecting_ribbons}
\end{figure}

Actually, in the proof of the above proposition, we can replace the three subshifts $S'_{\alpha}$, $S'_{\beta}$ and $S'_{\alpha/\beta}$ by the subshift $S'$ formed by all the Sturmian words (with any slope), which is also effective according to Proposition~\ref{prop:sofic_sturmians_computable_slopes}.
This yields that ``planarity is sofic'', formally:

\begin{proposition}\label{prop:weak_local_rules_all_plane}
The set of all the $2$-planes in $\mathbb{R}^3$ has weak colored local rules of thickness $2$.
\end{proposition}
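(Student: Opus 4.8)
The plan is to adapt the proof of Proposition~\ref{prop:weak_local_rules_one_plane} almost verbatim, replacing the three single-slope subshifts by one big subshift containing \emph{all} Sturmian lines. First I would invoke Proposition~\ref{prop:sofic_sturmians_computable_slopes} with the recursively closed set $A=\mathbb{R}^+$ (trivially recursively closed), which shows that the set of Sturmian words of arbitrary slope forms an effective subshift; feeding this into Theorem~\ref{th:subaction} and then Proposition~\ref{prop:recursive_subshift} gives that the corresponding quasisturmian subshift $S'=S'_{\mathbb{R}^+}$ is sofic. As before, realize this sofic subshift as the $\{1,2\}$-labelled tiling space of a finite Wang tile set, shear those squares along $\vec{e}_3$ into type $1$ and type $2$ prototiles, and add the finitely many type $3$ ``transfer'' tiles needed to propagate edge decorations along $\vec{e}_3$ (Fig.~\ref{fig:shearing_wang}). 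This yields a finite tile set whose $3\to 2$ tilings, projected along $\vec{e}_1+\vec{e}_2$, are exactly the configurations of $S'$.

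Next I would do the same construction for the two other projection directions $\vec{e}_1+\vec{e}_3$ and $\vec{e}_2+\vec{e}_3$, obtaining two more finite tile sets, and then merge all three by the cartesian-product-on-colors trick described in the proof of Proposition~\ref{prop:weak_local_rules_one_plane}: tiles of the same shape are fused, with the colors on their edges encoded injectively into a single color. The resulting finite tile set forces every $\vec{v}_i$-directed ribbon (for each $i$) to be, after projection, a quasisturmian word of \emph{some} slope. The thickness argument is then literally the one already given: any two vertices $x,y$ of such a tiling lie on ribbons directed by two distinct vectors $\vec{v}_i,\vec{v}_j$, these ribbons meet at a vertex $z$, each ribbon's lift sits in a tube of the form $Z+\mathbb{R}\vec{w}+(\text{box of width }2\text{ in one coordinate})$ around \emph{a} plane, and summing the two displacements confines $x-y$ to a slice of width at most $2$ in every coordinate after intersecting the three possible box choices. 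Hence the lift of any tiling is planar of thickness $2$.

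It remains to check the two defining conditions of ``the set of all $2$-planes has weak colored local rules of thickness $2$.'' For the first condition, given an arbitrary $2$-plane $E$ with normal $(\alpha,\beta,\gamma)$ (the degenerate case of normal $\vec{e}_i$ being handled trivially and excluded, cf. the footnote in Section~\ref{sec:settings}), I would exhibit a planar $3\to 2$ tiling of thickness $\le 2$ with slope parallel to $E$ lying in the tile space: take the genuine thickness-$1$ cut-and-project tiling of slope $E$; its projections along the three directions are stacked Sturmian words, each stack sharing a slope but with varying intercept, and by Proposition~\ref{prop:parallel_sturmian} these are pairwise at distance $\le 1$, hence each projected configuration lies in the appropriate $S'$, so the tiling is admitted. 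For the second condition, the thickness argument above shows every admitted tiling is planar of thickness exactly $2$ with some slope; one checks its slope is an actual $2$-plane of $\mathbb{R}^3$ (it cannot degenerate since the ribbon structure forces genuine two-dimensional spread), which automatically lies in the set of all $2$-planes. The main obstacle I anticipate is not conceptual but bookkeeping: verifying carefully that the merged tile set indeed forces \emph{all three} ribbon families to be quasisturmian simultaneously and that no spurious non-planar tilings slip through the product construction — essentially re-running the thickness estimate of Proposition~\ref{prop:weak_local_rules_one_plane} while keeping track that the slopes of the three ribbon families are consistent (which they are, since $S'$ imposes no relation between slopes but the geometry of a single lift does).
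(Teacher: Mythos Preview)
Your proposal is correct and follows essentially the same approach as the paper: the paper's own proof is a one-line remark that in the argument of Proposition~\ref{prop:weak_local_rules_one_plane} one simply replaces the three slope-specific subshifts $S'_\alpha$, $S'_\beta$, $S'_{\alpha/\beta}$ by the single quasisturmian subshift $S'=S'_{\mathbb{R}^+}$ (sofic via Proposition~\ref{prop:sofic_sturmians_computable_slopes}, Theorem~\ref{th:subaction}, and Proposition~\ref{prop:recursive_subshift}), after which the shearing, product, and thickness-$2$ estimate go through verbatim. Your write-up is slightly more explicit than the paper's (you take all three projections and verify both defining conditions of weak local rules), but this is only extra care, not a different argument.
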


\subsection{The general case of $2$-planes in $\mathbb{R}^3$}

In the general case of a recursively closed set of $2$-planes in $\mathbb{R}^3$, the lines of the three subshifts obtained by projection are no more independent.
We shall synchronize them to prove

\begin{proposition}\label{prop:weak_local_rules_plane_set}
Any recursively closed set of $2$-planes in $\mathbb{R}^3$ has weak colored local rules of thickness $2$.
\end{proposition}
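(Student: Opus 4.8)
The plan is to adapt the proof of Proposition~\ref{prop:weak_local_rules_one_plane} to a recursively closed set $\mathcal{S}$ of $2$-planes. The obstacle compared to the single-plane case is that a $2$-plane in $\mathbb{R}^3$ with normal vector $(1,\alpha,\beta)$ is described by \emph{two} parameters $\alpha,\beta$, whose ratio governs the third projected subshift; when $\mathcal{S}$ is a nontrivial set, the three slopes $|\alpha|$, $|\beta|$, $|\alpha/\beta|$ of the three quasisturmian subshifts attached to the three projections of a $3\to 2$ tiling cannot be chosen independently. So one cannot simply take a cartesian product of three independently-defined sofic subshifts: one must \emph{synchronize} the three coordinates so that only triples $(\alpha,\beta,\alpha/\beta)$ arising from an actual plane in $\mathcal{S}$ are allowed.

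First I would encode a plane by a single real parameter. Normalize each $E\in\mathcal{S}$ by its normal vector $(1,\alpha,\beta)$ (the degenerate cases, where the normal is $\vec{e}_1$, $\vec{e}_2$ or $\vec{e}_3$, are handled separately and trivially as noted in the paper), and interleave the binary expansions of $\alpha$ and $\beta$ into one real number $\omega(E)$; let $A\subset\mathbb{R}$ be the set of these $\omega(E)$. Since $\mathcal{S}$ is recursively closed and the encoding map and its inverse-image behaviour are computable, $A$ is recursively closed. Now I would build, instead of three separate Sturmian subshifts, a single effective subshift $T$ over a larger alphabet whose configurations carry (on extra tracks) a Sturmian word of slope $\omega(E)$ together with Sturmian words of the three slopes $|\alpha|$, $|\beta|$, $|\alpha/\beta|$, subject to the arithmetic constraint that these four slopes come from one and the same $E$ with $\omega(E)\in A$. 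The point is that ``a Turing machine can, from the digits of $\omega$, recover $\alpha$ and $\beta$ and check that $\omega\in A$ (via the recursive enumeration of the complement) and that the three auxiliary Sturmian words have the correct slopes'' — exactly the kind of verification Theorem~\ref{th:subaction} turns into a sofic $2$-dimensional subshift; Proposition~\ref{prop:sofic_sturmians_computable_slopes}'s proof is the template for why the relevant forbidden-pattern set is recursively enumerable. Relaxing each of the three working tracks to quasisturmian words exactly as in Section~\ref{sec:quasisturmian} (Proposition~\ref{prop:recursive_subshift}, applied to this effective $T$ in place of $S_A$) keeps the resulting subshift $S''$ sofic while letting the intercepts on the three families of de Bruijn lines vary freely — which, by Proposition~\ref{prop:parallel_sturmian}, is precisely what the projections of genuine $3\to 2$ planar tilings with slope in $\mathcal{S}$ require.

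Next I would convert $S''$ into a Wang tile set and then, by the shearing-and-transfer construction of Figure~\ref{fig:shearing_wang}, into a genuine $3\to 2$ rhombus tile set. The synchronization track forces any tiling formed to have its three projected quasisturmian subshifts governed by slopes $|\alpha|,|\beta|,|\alpha/\beta|$ with $(1,\alpha,\beta)$ normal to some $E\in\mathcal{S}$; conversely, for each $E\in\mathcal{S}$ the stacked quasisturmian configurations realizing $E$ lift to an actual tiling formed by the tile set, so every plane of $\mathcal{S}$ is realized. The thickness bound is then identical to the single-plane argument: the $\vec{v}_i$-directed ribbons are quasisturmian words of slope $|\alpha|$, $|\beta|$ or $|\alpha/\beta|$, hence lift into tubes of the form $X+\mathbb{R}\vec{u}+[0,1]^2\times[0,2]$ around the relevant plane; connecting any two vertices through an intersecting pair of ribbons as in Figure~\ref{fig:connecting_ribbons} confines the lift to a box $[0,2]\times[0,3]\times[0,3]$ slid along $E$, and intersecting the three choices of connecting ribbon gives the box $[0,2]^3$, i.e. thickness $2$. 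The slope is therefore well-defined and parallel to an element of $\mathcal{S}$, establishing that $\mathcal{S}$ is enforced by weak colored local rules of thickness~$2$.

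The hard part is the synchronization step: making precise that the arithmetic relations among the four slopes, together with membership $\omega(E)\in A$, define an \emph{effective} subshift. The subtlety is that a single binary digit of $\alpha$ (hence of $\omega$) does not determine any single letter of the Sturmian word of slope $|\alpha|$ — slopes are continuous data spread across the whole bi-infinite word — so the synchronization cannot be local; it must be delegated to the Turing machine supplied by Theorem~\ref{th:subaction}, which reads the entire parallel-copied configuration. I would phrase the constraint as: there is no finite pattern witnessing that the four tracks are locally consistent with \emph{some} common $E$ with $\omega(E)\notin B_n$ for all $n$ — and argue, as in Proposition~\ref{prop:sofic_sturmians_computable_slopes}, that the set of patterns \emph{refuting} this is recursively enumerable (each track's finite factors pin the slope to a computable rational interval, the intersection of the four intervals under the maps $\alpha,\beta\mapsto|\alpha|,|\beta|,|\alpha/\beta|$ and the decoding $\omega\leftrightarrow(\alpha,\beta)$ is computable, and one waits for the complement-enumeration of $A$ to swallow it). Once effectivity of $T$ is in hand, everything downstream is a verbatim repetition of the $S'_A$ machinery.
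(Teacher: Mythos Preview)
Your proposal correctly identifies synchronization as the crux, and your plan to encode the pair $(\alpha,\beta)$ on extra tracks of a single one-dimensional effective subshift $T$ is sound in itself: applying Theorem~\ref{th:subaction} and the quasisturmian relaxation does produce a two-dimensional sofic subshift $S''$ whose rows carry correlated slopes. The gap is in the next step. A single shearing of $S''$ via the construction of Figure~\ref{fig:shearing_wang} yields a rhombus tile set in which \emph{one} family of ribbons --- say the $\vec v_1$-ribbons --- is forced to be quasisturmian of slope $\alpha$, with the auxiliary tracks riding along as passive decorations. But those decorations impose no constraint on the $\vec v_2$- or $\vec v_3$-ribbons, whose slopes are fixed by the \emph{geometry} of the tiling, not by what is written on the $\vec v_1$-ribbons. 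Your sentence ``the synchronization track forces any tiling formed to have its three projected quasisturmian subshifts governed by slopes $|\alpha|,|\beta|,|\alpha/\beta|$'' is precisely the unjustified leap: nothing links the $\beta$-track on a $\vec v_1$-ribbon to the actual slope of a $\vec v_2$-ribbon. And if you instead shear three separate copies of $S''$ along the three directions and take their cartesian product, the copies are independent and may carry three unrelated pairs $(\alpha_i,\beta_i)\in A$ --- exactly the uncorrelated product the paper warns against. The Turing machine of Theorem~\ref{th:subaction} cannot help here: it only reads along one direction.

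What the paper supplies, and what your proposal lacks, is a \emph{local geometric} transfer mechanism between ribbon families. The paper also hides a Sturmian word of slope $\beta$ on the $\vec v_1$-ribbons (via the alphabet $\{0,1,\tilde 1\}$, playing the r\^ole of your $\beta$-track), written on the $(\vec v_1,\vec v_2)$-tiles; independently it makes each $\vec v_2$-ribbon carry a hidden Sturmian word of its own slope $\delta$, on the same tiles. The decisive extra ingredient is then a set of edge-matching rules that route the hidden letters along ``diagonal curves'' crossing both ribbon families, forcing the hidden word seen from a $\vec v_1$-ribbon to coincide letter by letter with the hidden word seen from the $\vec v_2$-ribbons it meets --- whence $\beta=\delta$. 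That flow is the genuine hard part; the effectivity of $T$ that you devote your last paragraph to is, as you guessed, a routine variant of Proposition~\ref{prop:sofic_sturmians_computable_slopes}. (A secondary problem: bit-interleaving $\alpha,\beta\mapsto\omega$ is discontinuous at dyadic rationals, so your image set need not be closed even when $\mathcal S$ is; the paper's $\{0,1,\tilde 1\}$ encoding sidesteps this by keeping both parameters as Sturmian data rather than digits.)
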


\begin{proof}
Let $A$ be a recursively closed set of slopes.
We assume that the greatest entry of a slope is always the first one (if it is not the case, we split $A$ in sets $A_i$'s with the greatest entry of a vector in $A_i$ being the $i$-th one, and we apply what follows to each $A_i$ to get a set of colored tile $\tau_i$ which produce planar tiling with slope in $A_i$, and we take the union of the $\tau_i$'s).
Hence $A$ can be seen as a subset of $\mathbb{R}^2$, with each $(\alpha,\beta)\in A$ corresponding to a plane of slope $(1,\alpha,\beta)$.\\

As in Prop.~\ref{prop:weak_local_rules_one_plane}, we can find colored tiles such that $\vec{v}_1$-ribbons of any allowed tiling are quasisturmian words of slope $\alpha$ for each first entry $\alpha$ of a vector in $A$ (since the projection on the first entry of $A$ is still a recursively closed set).
Similarly, we enforce the $\vec{v}_2$-ribbons (resp. $\vec{v}_3$-ribbons) to be quasisturmian words of slope $\beta$ (resp. $\alpha/\beta$) for each second entry $\beta$ of a vector in $A$ (resp. for each $\alpha$ and $\beta$ such that there is $(\gamma,\delta)\in A$ with $\alpha/\beta=\gamma/\delta$).
But this is not sufficient because the obtained tile set shall form all the planar tilings of thickness $2$ with slopes in
$$
\{(\alpha,\beta)~|~\exists (\gamma,\delta)\in A,~(\alpha,\delta)\in A \textrm{ or } (\gamma,\beta)\in A \textrm{ or } \alpha/\beta=\gamma/\delta\},
$$
which is in general not equal to $A$ (except if $A$ is the intersection of a pro\-duct of two intervals by some lines).
We thus also need to {\em synchronize} the ribbons.\\

We first modify the tile set so that, given a possible tiling whose $\vec{v}_1$-ribbons are quasisturmian words of slope $\alpha$, then for any $\beta$ such that $(\alpha,\beta)\in A$, a Sturmian word of slope $\beta$ shall be ``hidden'' in each of these $\vec{v}_1$-ribbons (with the same $\beta$ for all the $\vec{v}_1$-ribbons of a given tiling), namely on the tiles with edges $\vec{v}_1$ and $\vec{v}_2$.
Let us explain how to ``hide'' these Sturmian words.
We introduce the one-dimensional subshift
\begin{equation}\label{eq:S_A2}
\tilde{S}_A=\left\{
x\in\{0,1,\tilde{1}\}^{\mathbb{Z}}~:~
\exists
\begin{array}{l}
(\alpha,\beta)\in A,\\
\rho,\tau\in[0,1]
\end{array}
,~
\begin{array}{l}
\varphi(x)=s_{\alpha,\rho}\\
\psi(x)=s_{\beta,\tau}
\end{array}
\right\}
\end{equation}
where $\varphi$ and $\psi$ are the morphisms over words defined by
$$
\varphi~:~\left\{\begin{array}{ccc}
0 &\mapsto& 0\\
1 &\mapsto& 1\\
\tilde{1} &\mapsto& 1
\end{array}\right.
\qquad\textrm{and}\qquad
\psi~:~\left\{\begin{array}{ccc}
0 &\mapsto& \varepsilon\\
1 &\mapsto& 0\\
\tilde{1} &\mapsto& 1
\end{array}\right.,
$$
and $\varepsilon$ denotes the empty word.
In other words, $\psi$ reveals the ``hidden'' Sturmian word $s_{\beta,\tau}$ which is encoded in the distinction between $1$ and $\tilde{1}$, while $\varphi$ removes this distinction by identifying $1$ and $\tilde{1}$.
This subshift is effective.
Indeed, we can enumerate all the words over $\{0,1,\tilde{1}\}$ by lexicographic order and, given such a word $w$, compute $\varphi(x)$ and $\psi(w)$, and check that their are factors of Sturmian words of slope $\alpha$ and $\beta$ for some $(\alpha,\beta)\in A$ (this is possible because $A$ is recursively closed).
We can then proceed as we did in Section~\ref{sec:quasisturmian}.
We first extend $\tilde{S}_A$ to a two-dimensional subshift with equal lines which is sofic according to Th.~\ref{th:subaction}.
We then relax it by allowing lines to be quasisturmian words of slope $\alpha$ - with the Sturmian words of slope $\beta$ remaining hidden.
We finally transform it into a finite tile set as in Prop.~\ref{prop:weak_local_rules_one_plane} - with the Sturmian words of slope $\beta$ being written on the tiles with edges $\vec{v}_1$ and $\vec{v}_2$.\\

We proceed similarly on the $\vec{v}_2$-ribbons, but the $\vec{v}_2$-ribbons are now quasisturmian words of slope $\delta$ such that there is $\gamma$ with $(\gamma,\delta)\in A$, and we enforce the slope of the hidden Sturmian word (also written on the tiles with edges $\vec{v}_1$ and $\vec{v}_2$) to be {\em the same} as the one of the quasisturmian word it is hidden in.\\

Hence, for any valid tiling, there are now $(\alpha,\beta)$ and $(\gamma,\delta)$ in $A$ such that the $\vec{v}_1$-ribbons are quasisturmians words of slope $\alpha$ with a hidden Sturmian word of slope $\beta$, while the $\vec{v}_2$-ribbons are quasisturmians words of slope $\delta$ with a hidden Sturmian word of slope $\delta$.\\

The last step is to ``connect'' the tiles with edges $\vec{v}_1$ and $\vec{v}_2$ in order to enforce the equality of the Sturmian words which are hidden in.
This would yields $\beta=\delta$, hence quasisturmian $\vec{v}_2$-ribbons of slope $\beta$, that is, a slope $(\alpha,\beta)\in A$ for the tiling.
To do this, we allow the letters written on the tiles with edges $\vec{v}_1$ and $\vec{v}_2$ (there are only two possible letters) to flow as follows through the tiles (Fig.~\ref{fig:synchronization2}):
\begin{itemize}
\item the tiles with edges $\vec{v}_1$ and $\vec{v}_3$ transmit each letter between its two $\vec{v}_1$-edges;
\item the tiles with edges $\vec{v}_2$ and $\vec{v}_3$ transmit each letter between its two $\vec{v}_2$-edges;
\item the tiles with edges $\vec{v}_1$ and $\vec{v}_2$ transmit each letter between one of its $\vec{v}_1$-edge and the $\vec{v}_2$-edge which starts at the same point (no matter which edge is chosen, but we shall do the same choice uniformly for every such tile), while only the letter that is written on the tile is allowed to appear on the two remaining edges.
\end{itemize}
This way, the hidden letters will flow along non-intersecting ``diagonal curves'' which cross both $\vec{v}_1$- and $\vec{v}_2$-ribbons, enforcing letter by letter the hidden Sturmian words on the $\vec{v}_1$-ribbons to be equal to the one on the $\vec{v}_2$-ribbons (Fig.~\ref{fig:synchronization3}).
\end{proof}

\begin{figure}[hbtp]
\centering
\includegraphics[width=\textwidth]{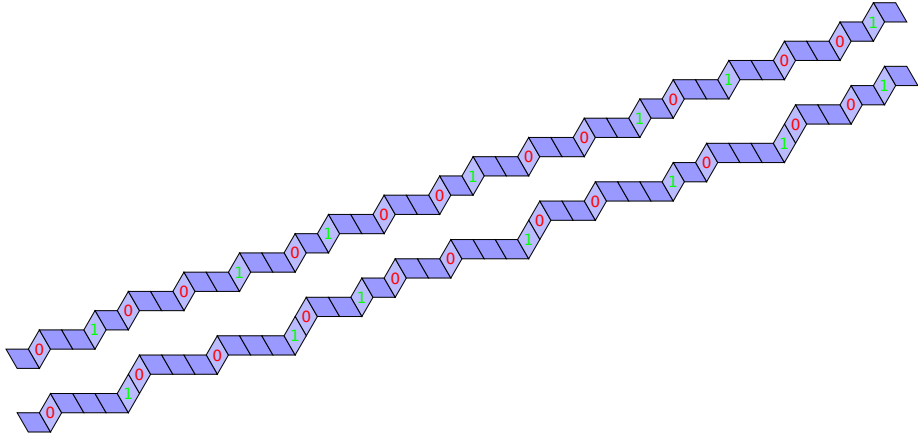}
\caption{
  A Sturmian (top) and a quasisturmian (bottom) $\vec{v}_1$-ribbons, both with the same hidden quasisturmian word (letters written on the tiles).
}
\label{fig:synchronization1}
\end{figure}

\begin{figure}[hbtp]
\centering
\includegraphics[width=0.5\textwidth]{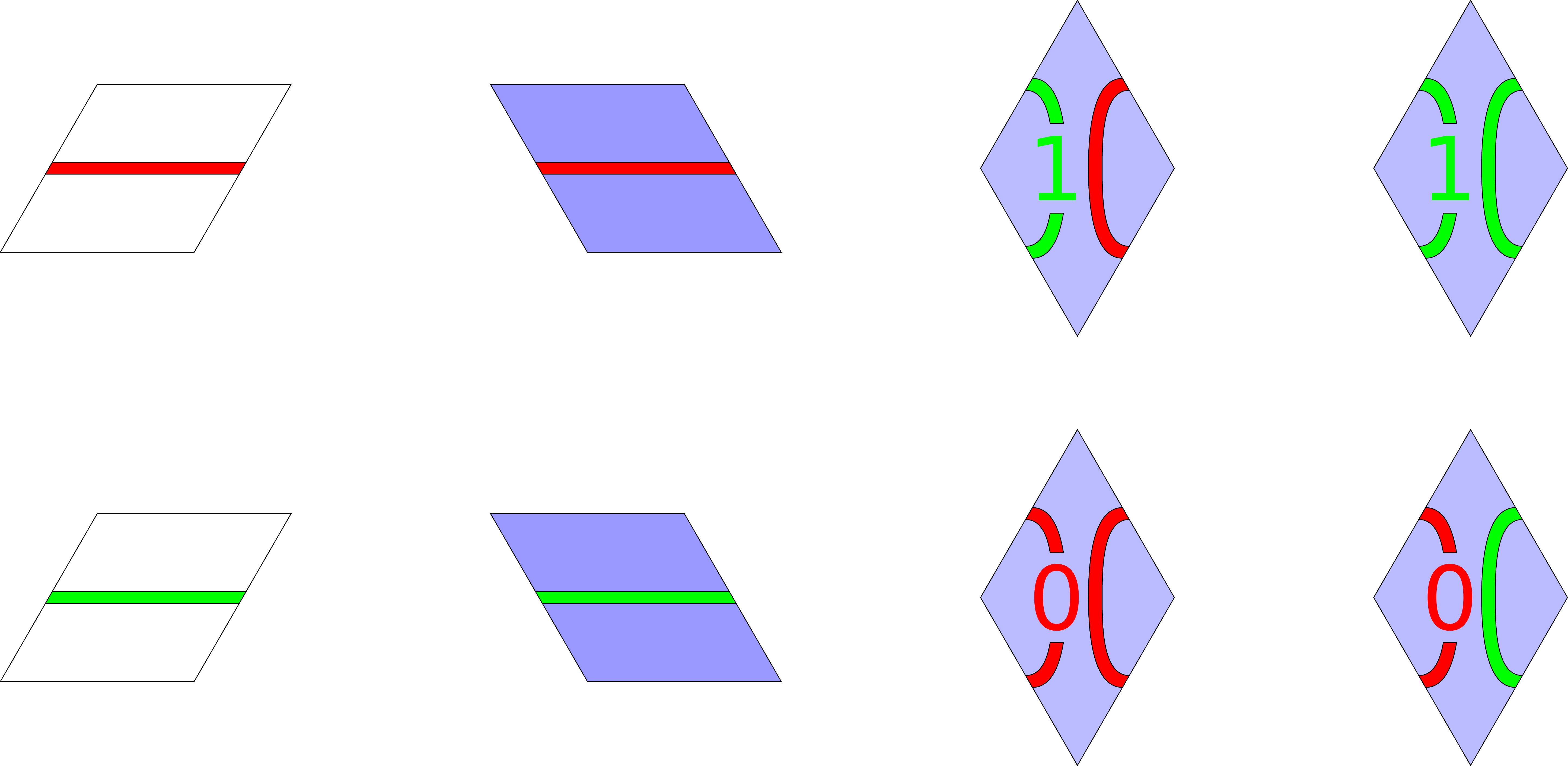}
\caption{
  The tiles with the decorations that transmit the letters of the hidden Sturmian words to synchronize the $\vec{v}_1$- and $\vec{v}_2$-ribbons.
}
\label{fig:synchronization2}
\end{figure}

\begin{figure}[hbtp]
\centering
\includegraphics[width=\textwidth]{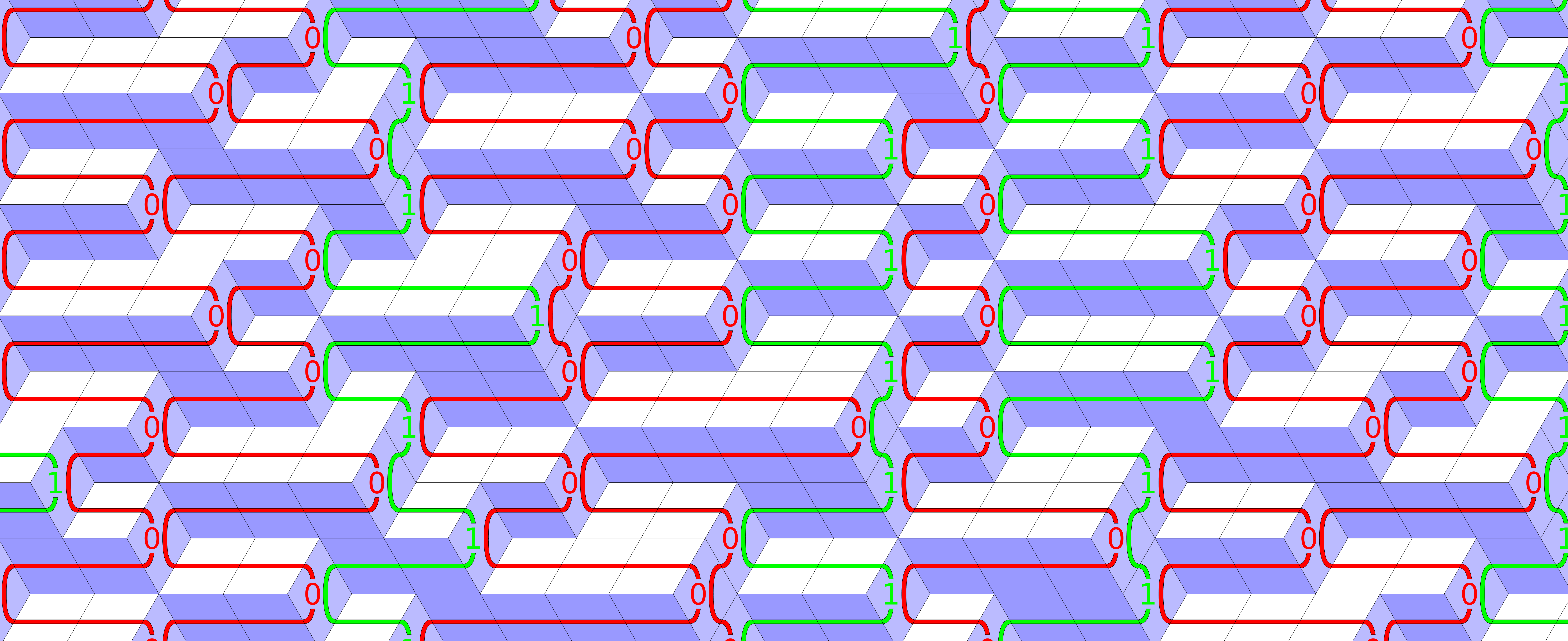}
\caption{
  A tiling whose $\vec{v}_1$-ribbons are quasisturmian of slope $\alpha$ with a hidden Sturmian of slope $\beta$, while the $\vec{v}_2$-ribbons are quasisturmian of slope $\gamma$ with a hidden Sturmian of slope $\gamma$ ($\vec{v}_1$- and $\vec{v}_2$-ribbons are those which contain the tiles with a hidden letter).
  The tile decorations transfer the hidden letters to ensure that the hidden Sturmian words on both ribbons are equal, that is, $\gamma=\beta$.
}
\label{fig:synchronization3}
\end{figure}

\subsection{Higher codimension and dimension}

The last step to prove Theorem~\ref{th:main} is to extend Prop.~\ref{prop:weak_local_rules_plane_set} to higher dimension and codimension tilings.
This is a bit technical but rather simple.\\

For higher codimensions, we proceed by induction.
Our induction hypothesis is that any computable planar $n\to 2$ tiling admits weak local rules.
This holds for $n=3$ according to the previous section.
Let now $\mathcal{T}$ be an computable planar $(n+1)\to 2$ tiling.
For any basis vector $\vec{e}_i$, we project the lift of $\mathcal{T}$ along $\vec{e}_i$ to get the lift of an computable planar $n\to 2$ tiling, say $\mathcal{T}_i$.
By assumption, $\mathcal{T}_i$ admits local rules: let $\tau_i$ be a tile set whose tilings are at distance at most $w$ from $\mathcal{T}_i$.
We complete $\tau_i$ by adding the tiles with a $\vec{v}_i$-edge (that is, the tiles which disappeared from $\mathcal{T}$ by projecting along $\vec{e}_i$), with each of these tiles having no decoration on its $\vec{v}_i$-edges, and on the other edges a unique decoration that could be any of those appearing on an edge of a tile in $\tau_i$.
These new tiles thus just transfer decorations between the tiles of $\mathcal{T}_i$ (see Fig.~\ref{fig:higher_codim} for $n+1=4$).
Last, we define the tile set $\tau$ as the cartesian product of all the $\tau_i$'s (as we did for $\tau_\alpha$ and $\tau_\beta$ in the previous section).
This allows only $n+1\to 2$ tilings at distance at most $w'$ from $\mathcal{T}$ - in particular $\mathcal{T}$ itself.
This shows that $\mathcal{T}$ admits local rules.\\

\begin{figure}[hbtp]
\centering
\includegraphics[width=\textwidth]{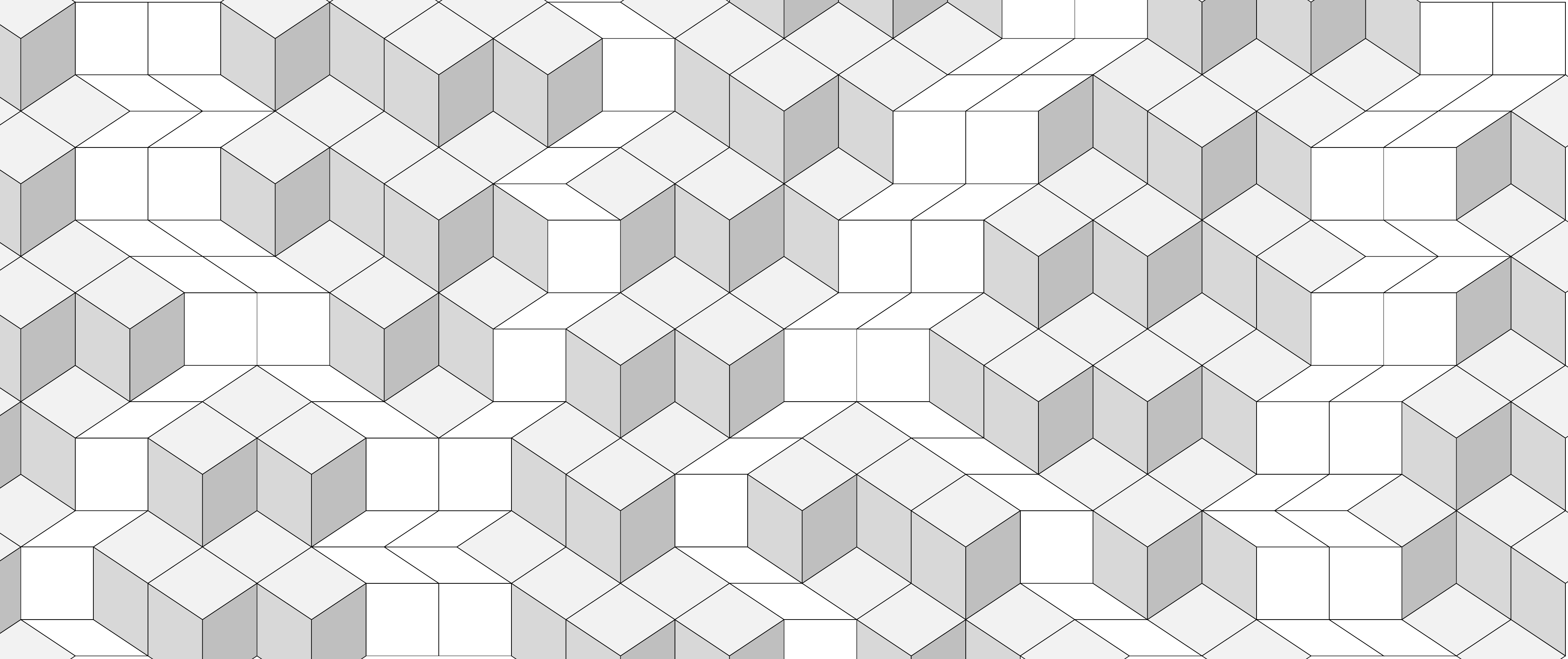}
\caption{
A $4\to 2$ tiling by shaded $\tau_i$ tiles and white additional tiles, with $\vec{v}_i$ being here horizontal.
The white tiles simply transfer horizontally the decorations of the shaded tiles.
By contracting each $\vec{v}_i$-edges to a point, the white ribbons disappear: we get a $3\to 2$ tiling by $\tau_i$, at distance at most $w'$ from $\mathcal{T}_i$.}
\label{fig:higher_codim}
\end{figure}

For higher dimensions, we also proceed by induction.
Our induction hypothesis is, for a fixed $n$, that any computable planar $n\to d$ tiling, $d<n$, admits local rules.
This holds for $d=2$ according to the above paragraph.
Let now $\mathcal{T}$ be an computable planar $n\to (d+1)$ tiling, with $d+1<n$.
Fix $i\in\{1,\ldots,n\}$.
For two tiles $T$ and $T'$ of $\mathcal{T}$, write $T\sim T'$ if these tiles share a $\vec{v}_i$-edge and let $\simeq$ be the transitive closure of the relation $\sim$.
Denote by $(\mathcal{T}_k)_{k\in\mathbb{Z}}$ the equivalence classes of $\simeq$, such that, for any $k$, $\mathcal{T}_k$ and $\mathcal{T}_{k+1}$ can be connected by a path which does not cross any other equivalence class (the $\mathcal{T}_k$'s play the role of $\vec{v}_i$-ribbons in the previous section).
By contracting all the $\vec{v}_i$-edges of a $\mathcal{T}_k$ ({\em flattening}), one gets a planar $n\to d$ tiling.
Its slope moreover depends only on the slope of $\mathcal{T}$, and in particular it is effective.
This allows to see $\mathcal{T}$ as a sequence of ``stacked'' parallel computable planar $n\to d$ tilings (namely the flattened $\mathcal{T}_k$'s), with the remaining tiles containing no $\vec{v}_i$-edge.
By induction, there exists a finite tile set $\tau_i$ whose tilings are at bounded distance $w$ from any of the flattened $\mathcal{T}_k$'s (since they are all parallel).
It is straighforward to ``unflatten'' $\tau_i$ to get a tile set $\tilde{\tau}_i$ whose tilings are at bounded distance $w$ from any of the $\mathcal{T}_k$'s.
We complete $\tilde{\tau}_i$ by adding the tiles without $\vec{v}_i$-edge (that is, the tiles lying between the stacked $\mathcal{T}_k$'s), with decorations being just transferred between consecutive $\mathcal{T}_k$'s along the direction $\vec{v}_i$ (as done in the previous section to transfer decorations between consecutive $\vec{v}_i$-ribbons).
The last step is (as in the previous section again) to define the cartesian product $\tau$ of the tile sets $\tilde{\tau}_i$, $i=1,\ldots,d$: its tilings are those at bounded distance $w$ from $\mathcal{T}$ - in particular $\mathcal{T}$ itself.
This shows that $\mathcal{T}$ admits local rules.

\section{Removing colors}
\label{sec:uncolor}

In this last section, we prove that colored weak local rules can be replaced by uncolored weakened local rules with almost no loss on the set of planes that can be enforced (Prop.~\ref{prop:color_removing}, below).
This shows that the power of weakened and weak local rules are dramatically different (uncolored weak local rules can indeed only enforce algebraic planes, recall Sec.~\ref{sec:results}), althoug the only additional property of the latter is that at least one perfectly planar tiling (that is, of thickness $1$) can be formed (recall Sec.~\ref{sec:local_rules}).
The proof is technical but rather simple: we use the allowed fluctuations around the plane to encode the colors of the local rules.
This is where the small loss of power comes: we need {\em non-degenerated} planes to encode suitably colors in fluctuations.\\

A $d$-plane $E\subset\mathbb{R}^n$ is said to be {\em degenerated} if it is the slope of a $n\to d$ planar tiling of thickness one which does not contain all the $\binom{n}{d}$ possible different tiles (equivalently, $E$ has at least one Grassmann coordinate equal to zero).
If $E$ is non-degenerated, then it contains a vertex which belongs to exactly $d+1$ tiles, and the region tiled by these $d+1$ tiles can be tiled in exactly one another way by the same tile.
The operation which exchange these two possible tilings is called a {\em flip}: it consists in translating each of the $d+1$ tiles along the vectors shared by the $d$ other ones (see Fig.~\ref{fig:flip}).
Such a flip yields a new $n\to d$ planar tiling of slope $E$, whose thickness can however increase by one.\\
\begin{figure}[hbtp]
  \centering
\includegraphics[width=0.9\textwidth]{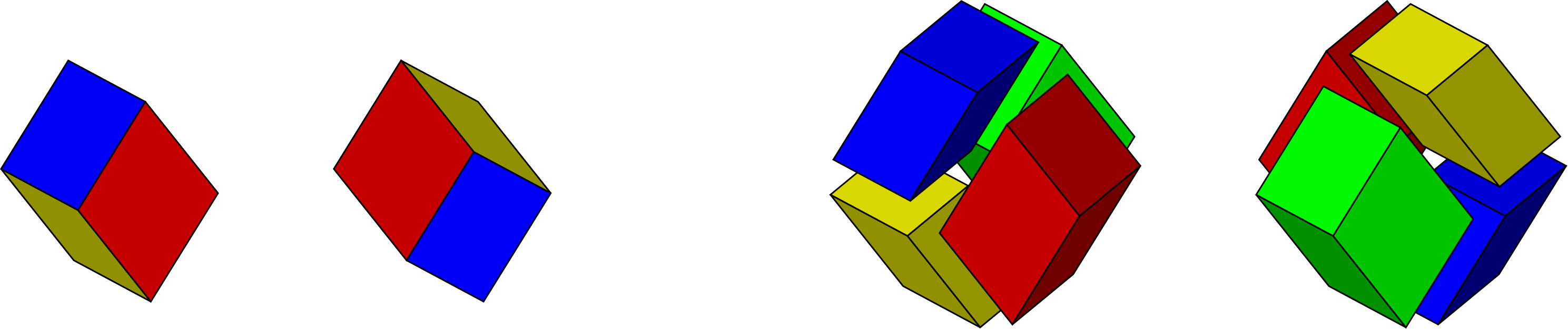}
\caption{Flips in $\mathbb{R}^2$ (left) and $\mathbb{R}^3$ (right, exploded view)}
\label{fig:flip}
\end{figure}

\noindent We shall use flips to remove colors:

\begin{proposition}\label{prop:color_removing}
A set of non-degenerated planes enforced by colored weak local rules of thickness $t$ can also be enforced by weakened local rules of thickness $t+1$.
\end{proposition}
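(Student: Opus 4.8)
The plan is to take a finite colored tile set $\tau$ witnessing the colored weak local rules of thickness $t$ for the non-degenerated set $\mathcal{E}$ and to construct an uncolored tile set $\sigma$ of thickness $t+1$ enforcing the same set. The guiding idea, as announced in the section preamble, is that the extra unit of thickness buys us room to perform \emph{flips} near each vertex, and that a local pattern of ``flipped versus not-flipped'' positions can serve as a physical, geometric carrier for exactly the information that was previously carried by the colors. So the first step is to fix, for each color $c$ appearing in $\tau$, a way to read $c$ off the local flip configuration around a vertex: since $\mathcal{E}$ is non-degenerated, every slope $E\in\mathcal{E}$ produces tilings in which, by the discussion preceding the statement, there are vertices belonging to exactly $d+1$ tiles where a flip is available, and performing or not performing such a flip is a binary choice that stays within thickness $t+1$. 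With enough independent flip sites in a bounded neighborhood one encodes a color from the finite palette; concretely I would group the flip sites into blocks, say along a fixed lattice direction, and let a block of $\lceil \log_2 |\text{palette}| \rceil$ flip sites spell out the color in binary.

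Next I would define $\sigma$. Its prototiles are just the $\binom{n}{d}$ uncolored $n\to d$ prototiles. The forbidden patterns of $\sigma$ are obtained by translating the forbidden patterns of $\tau$: a finite uncolored patch $P$ is forbidden in $\sigma$ if, when we try to reconstruct from the flip pattern visible in a slightly enlarged neighborhood of $P$ the colors that the colored rules would assign, either the reconstruction is inconsistent (two overlapping blocks disagree about a color, or a block is ``half-flipped'' in a way no legal flip configuration allows) or the resulting colored patch is forbidden by $\tau$. This is a finite list because $\tau$'s rule set is finite, the palette is finite, and the neighborhood needed to read one color block has bounded radius; hence $X_\sigma$ has finite type. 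One must check the two defining conditions of weakened local rules. For the first, given $E\in\mathcal{E}$, take a colored planar tiling of thickness $\le t$ and slope parallel to $E$ allowed by $\tau$; since $E$ is non-degenerated it has flip sites, and I would argue they occur with positive density and in enough independent positions that one can realize \emph{the} flip pattern dictated by the tiling's colors — this produces an uncolored tiling of thickness $\le t+1$, slope parallel to $E$, lying in $X_\sigma$. For the second, any tiling in $X_\sigma$ has, by construction, a consistent color reconstruction, which yields a colored tiling allowed by $\tau$, hence of thickness $t$ with slope parallel to some element of $\mathcal{E}$; removing colors and noting that a flip changes the thickness by at most one gives thickness $t+1$ and slope still parallel to that element of $\mathcal{E}$.

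The step I expect to be the main obstacle is the \emph{faithfulness of the encoding}: I must ensure that the flip pattern is both \emph{rich enough} to store an arbitrary color assignment compatible with $\tau$ at every vertex, and \emph{rigid enough} that a tiling in $X_\sigma$ cannot cheat by exhibiting a flip pattern that decodes to a legal color assignment while actually having a slope outside $\mathcal{E}$ or an uncontrolled thickness. The delicate points are: (i) flip sites are not placed by us but are forced by the geometry of the slope, so I need a quantitative statement that a non-degenerated $d$-plane guarantees flip sites in every ball of some fixed radius, and moreover enough \emph{algebraically independent} flip sites to carry the required bits — here I would lean on the fact that a non-degenerated plane has all Grassmann coordinates nonzero, so tiles of every type appear with positive density and the relevant local vertex configurations recur; (ii) a single flip can create a \emph{new} nearby flip site or destroy one, so the decoding neighborhoods and the block layout must be chosen to be flip-robust — I would handle this by reading colors only from flip sites that are ``generic'' (surrounded by a fixed safe pattern), which still occur densely, and by forbidding in $\sigma$ any local configuration where this safe structure is violated. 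Once these combinatorial-geometric facts are pinned down, the verification of the two weakened-local-rule conditions is routine, and the thickness bookkeeping ($t$ for the colored tiling, $+1$ for one layer of flips) gives exactly $t+1$.
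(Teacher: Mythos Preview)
Your high-level strategy --- encode the colors geometrically via flips and then define uncolored local rules by decoding --- is exactly the paper's. But your implementation misses the key combinatorial device that makes the argument close, and the gap is precisely the bit-counting in your step (i).

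You propose to attach a color to every vertex (or tile) by reading a nearby block of $\lceil\log_2 c\rceil$ flip bits. The trouble is that disjoint flip sites occur with some fixed density per tile that depends on the slope, and for non-degenerated slopes close to the degenerate boundary this density can be made arbitrarily small, while the palette size $c$ is fixed by $\tau$. So the encoding capacity in a region of $N$ tiles can be far below the $N\log_2 c$ bits your scheme demands; there is no reason the constants cooperate. Your density argument (``all Grassmann coordinates nonzero, so flip configurations recur'') only yields \emph{positive} density, not density at least $\log_2 c$. The proposal to read only ``generic'' flip sites makes this worse, not better.

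The paper resolves this with a \emph{meta-tile} construction. It fixes one thickness-one colored tiling $\mathcal{P}$, lays a coarse grid on it by marking one in every $k$ ribbons of two fixed types, and treats each grid cell as a meta-tile. A meta-tile is determined by its shape together with the colors along its \emph{boundary}, so there are at most $(c+n)^{O(k)}$ types; by repetitivity each cell contains $\Theta(k^2)$ disjoint flips. Taking $k$ large enough gives $2^{\Theta(k^2)}>(c+n)^{O(k)}$, so the interior flips of each meta-tile suffice to encode its type. The uncolored forbidden patterns are then simply those (of diameter about twice a meta-tile) not appearing in the resulting flipped tiling $\mathcal{P}'$, with a few extra marker flips to rule out meta-tile overlap. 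This perimeter-versus-area asymptotic is exactly what beats the bit-counting, and the grid simultaneously solves your synchronization worry (ii) by providing a canonical frame; no ad hoc notion of ``generic'' flip site is needed.
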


\begin{proof}
For the sake of simplicity, we consider a single $2$-plane $E\subset\mathbb{R}^n$.
The general case is similar.
Consider a tile set with colored boundaries (as for Wang tiles) that can form all the planar tilings with slope $E$ and thickness at most $t$.
Among these tilings, let $\mathcal{P}$ be one with thickness $1$ (it is thus repetitive).\\

Let us fix two types of ribbons and, for each type, mark one of $k$ consecutive ribbons of this type in $\mathcal{P}$ (the parameter $k$ shall be later chosen).
By repetitivity of $\mathcal{P}$, there is a uniform bound on the distance between two consecutive intersections of a ribbon with the ribbons of the other type.
The ribbons thus draw on $\mathcal{P}$ a sort of grid whose cells have a perimeter proportional to $k$ and an area proportional to $k^2$ (see Fig.~\ref{fig:remove_colors_a}).\\

\begin{figure}[hbtp]
\includegraphics[width=\textwidth]{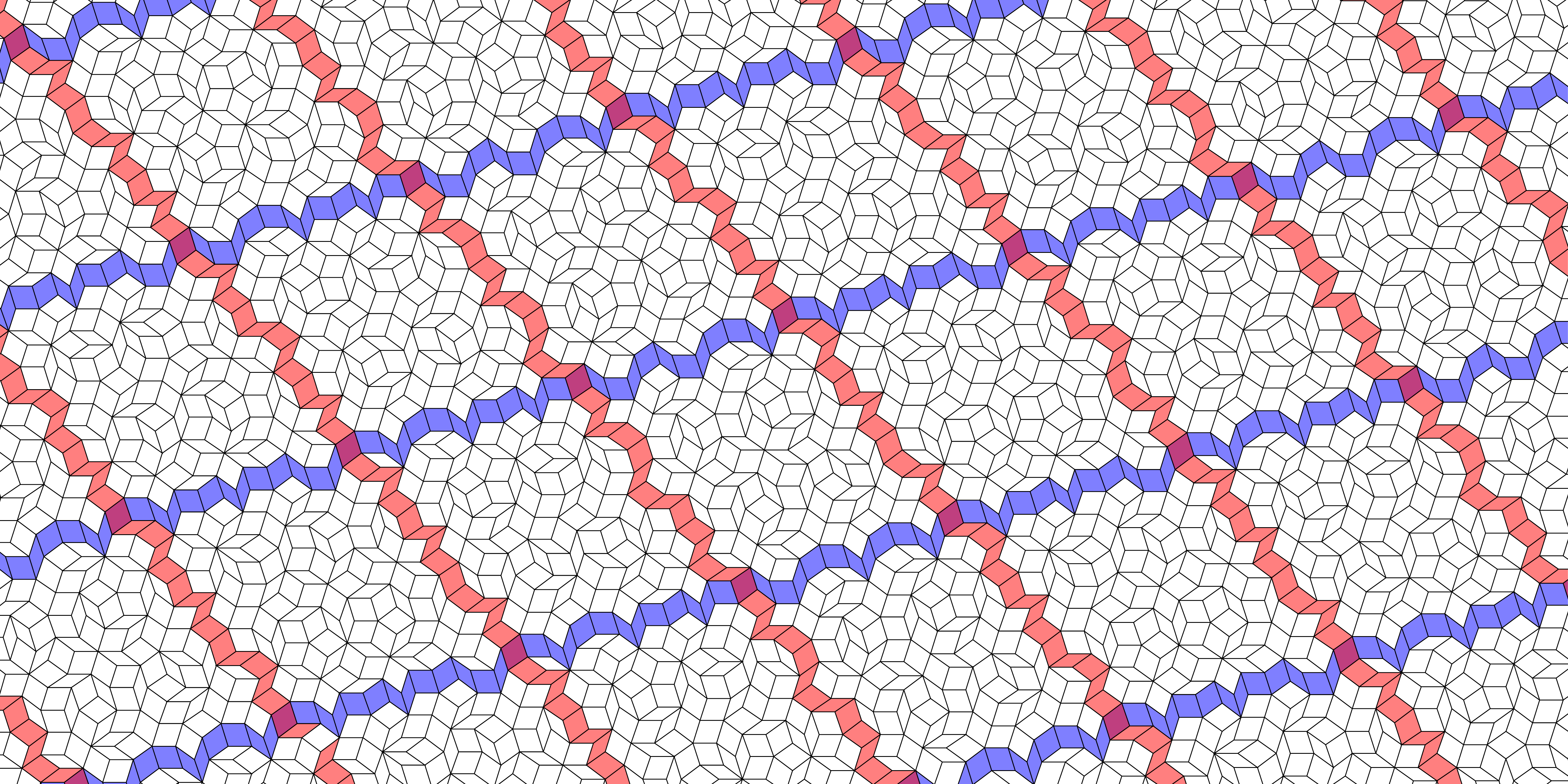}
\caption{
A grid on the tiling $\mathcal{P}$ obtained by marking one over three ribbons of two fixed types (colors of tiles are not depicted).
This define meta-tiles, the boundary colors of which can be encoded by flips performed in their interior.
}
\label{fig:remove_colors_a}
\end{figure}

If one sees these cells as meta-tiles, we get a set of colored meta-tiles which can only form a subset of the tilings formed by the initial tile set.
This subset is non-empty since it contains at least $\mathcal{P}$.
Moreover, there are at most $(c+n)^p$ different meta-tiles, where $c$ is the number of colors used by the initial tiles, $p$ is the maximal perimeter of the cells and $n$ is the number of possible directions for the tile edges.\\

Now, since the plane is non-degenerated, $\mathcal{P}$ contains at least one flip.
By repetitivity of $\mathcal{P}$, any pattern -- in particular a grid cell -- contains a number $f$ of flips which is proportional to its surface.
These $f$ flips can encode $2^f$ numbers (by performing or not each of them).
Since the area of a meta-tile grows with $k$ as the square of its perimeter, for $k$ big enough one has $2^f>(c+n)^p$, that is, there are enough flips to encode in each meta-tile the colors of the tile on its boundary.
Let then $\mathcal{P}'$ be the tiling obtained by performing the flip to encode meta-tiles boundaries and then removing the colors.\\

Let $F$ be the set of patterns which do not appear in $\mathcal{P}'$ and whose diameter (that is, the maximal number of tiles crossed by a line segment joining two of its vertices) is twice the maximal diameter of the meta-tiles (that is, the grid cells).
Consider a tiling without pattern in $F$ (such a tiling exists, for example $\mathcal{P}'$ itself).
On the one hand, any tile belongs to a meta-tile (whose boundary colors are encoded by flips).
On the other hand, whenever two meta-tiles are adjacent, their flips encode boundary colors which match.
However, nothing yet ensures that meta-tiles do not overlap.\\

To fix that, we shall use additional flips to put a special marking on each meta-tile that shall no be confused with the encoding of boundary colors.
The existence of such additional flips is not problematic: it suffices to increase the size of the meta-tiles.
There is then several way to proceed.
For example, consider two particular flips and let $\vec{x}$ denotes the vector which goes from one to the other.
By repetitivity, this pair of flips appears in each meta-tile for big enough meta-tiles.
Let us perform such a pair of flips in each meta-tile.
It then suffices, to avoid meta-tile overlap, to forbid the use of a flip at position $\vec{y}$ to encode boundary colors if there is another flip at position $\vec{y}+\vec{x}$.\\

Now, any tiling without pattern in $F$ is a tiling by meta-tiles.
By replacing the flips by the colors that they encode, this yields a tiling by the initial tile set, that is, a planar tiling of slope $E$ and thickness $t$.
These coding flips do not modify the slope, but they can increase the thickness by one (no more since these flips are disjoint).
Therefore the maximal thickness is $t+1$, and the local rules are weakened (instead of weak) because the tilings could all have thickess at least two.
This proves the claimed result.
\end{proof}

\noindent With the above proposition, Theorem~\ref{th:main} immediatly yields:

\begin{corollary}\label{cor:uncolor}
A set of non-degenerated planes is enforced by weakened local rules iff it is recursively closed.
\end{corollary}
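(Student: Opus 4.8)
The plan is to combine the two directions already established in the excerpt. The statement of Corollary~\ref{cor:uncolor} is an ``if and only if'', so I would split the proof into the two implications and handle each by citing the appropriate earlier result, being careful about the non-degeneration hypothesis and the way thickness changes under the color-removal construction.

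For the forward direction (weakened local rules $\Rightarrow$ recursively closed), I would invoke Proposition~\ref{prop:ens_pentes}, which already states that \emph{any} set of planes admitting local rules of any type --- in particular weakened local rules --- is recursively closed. Non-degeneration plays no role here, so this direction is immediate and requires only a one-line citation. For the reverse direction (recursively closed $\Rightarrow$ weakened local rules), I would start from a recursively closed set $\mathcal{E}$ of non-degenerated planes. By Theorem~\ref{th:main}, $\mathcal{E}$ is enforced by colored weak local rules; let $t$ be their thickness. Then Proposition~\ref{prop:color_removing} applies precisely because every plane in $\mathcal{E}$ is non-degenerated, and it converts these colored weak local rules of thickness $t$ into (uncolored) weakened local rules of thickness $t+1$ enforcing the same set $\mathcal{E}$. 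Stringing these two citations together gives the equivalence.

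The only genuine subtlety, and the point I would be careful to state explicitly, is that Proposition~\ref{prop:color_removing} is phrased for a single $2$-plane in $\mathbb{R}^n$ (with the remark ``the general case is similar''), whereas Corollary~\ref{cor:uncolor} concerns an arbitrary recursively closed \emph{set} of non-degenerated planes of arbitrary dimension and codimension. So in the write-up I would note that the grid/meta-tile/flip-encoding argument of Proposition~\ref{prop:color_removing} is carried out uniformly: the grid is drawn using ribbon types present in \emph{every} thickness-one tiling with a slope in $\mathcal{E}$, the bound $(c+n)^p$ on the number of meta-tile boundary colorings is uniform over $\mathcal{E}$, and the counting inequality $2^f > (c+n)^p$ for $k$ large is therefore also uniform. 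Hence a single choice of $k$ works simultaneously for all slopes in $\mathcal{E}$, and the resulting uncolored tile set enforces exactly $\mathcal{E}$ (with at least one thickness-one tiling per slope becoming a thickness $\le t+1$ tiling after the coding flips, and no tilings with slope outside $\mathcal{E}$). The higher-dimension/higher-codimension case is handled by the same reduction, flip by flip along ribbons, exactly as in Section~\ref{sec:weak_colored_rules}.

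In short, I expect no new difficulty: the corollary is a direct consequence of Theorem~\ref{th:main} together with Proposition~\ref{prop:color_removing} (for the non-trivial direction) and Proposition~\ref{prop:ens_pentes} (for the trivial one). The main obstacle --- if one insists on full rigor --- is merely verifying that the ``the general case is similar'' claim in Proposition~\ref{prop:color_removing} genuinely extends to sets of planes and to arbitrary $d < n$, which amounts to checking that all the quantitative bounds in that proof are uniform over the plane set; this is routine given the repetitivity and finite-local-complexity properties already in hand.
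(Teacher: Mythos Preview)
Your proposal is correct and follows exactly the route the paper intends: the paper simply writes ``With the above proposition, Theorem~\ref{th:main} immediately yields'' before stating the corollary, and your write-up unpacks this into the two implications (Proposition~\ref{prop:ens_pentes} for one direction, Theorem~\ref{th:main} followed by Proposition~\ref{prop:color_removing} for the other). Your extra care about the uniformity of the meta-tile/flip construction over a whole set of planes and in arbitrary dimension is a welcome elaboration of the paper's ``the general case is similar'' remark, but it does not change the argument's structure.
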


\appendix
\section{Proof of Proposition~\ref{prop:sturmian_lines_non_sofic}}
\label{sec:appendix}

We shall here prove that if $A\not\subset \mathbb{Q}$, then the following subshift is not sofic
$$
S''_A=\{x\in\{0,1\}^{\mathbb{Z}^2}~:~\exists \alpha\in A,~\forall m\in\mathbb{Z},~\exists \rho\in[0,1],~x(m,\cdotp)=s_{\alpha,\rho}\}.
$$
The key point is that its number of $n\times n$ patterns grows faster than an exponential in $n$, but slower than an exponential in $n^2$.
Let us first prove a lemma:

\begin{lemma}\label{lem:config_complexe}
  If $A\not\subset \mathbb{Q}$, then for any $n>0$, there exist $d>0$ and $x\in S''_A$ such that any $d\times d$ pattern of $x$ contains at least $n^n$ different disjoint $n\times n$ patterns.
\end{lemma}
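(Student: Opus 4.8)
The plan is to exploit the fact that when $\alpha\notin\mathbb{Q}$, the intercepts $\rho$ of Sturmian words of slope $\alpha$ genuinely matter: distinct $\rho$'s (mod the natural identifications) give distinct bi-infinite words, and moreover the number of distinct length-$m$ factors of a Sturmian word is $m+1$, so a window of width $m$ already distinguishes $\Theta(m)$ intercept classes. Fix $n>0$ and pick some $\alpha\in A$ with $\alpha\notin\mathbb{Q}$ (such an $\alpha$ exists by hypothesis $A\not\subset\mathbb{Q}$). I would first observe that there are at least $n+1$ distinct $n\times n$ patterns available as ``horizontal slabs'': for each of the $n+1$ factors $f$ of length $n$ of $s_{\alpha,0}$, there is an intercept $\rho_f$ and an index so that $n$ consecutive rows all equal to $s_{\alpha,\rho_f}$ realize $f$ in every row; stacking such a slab gives a legal partial configuration (a band of height $n$). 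Since in $S''_A$ each row independently chooses its own intercept, I can stack $n$ such bands on top of each other, choosing the factor independently in each band, to produce a height-$n^2$, width-$n$ strip; but I actually want $n\times n$ patterns, so instead I subdivide differently.

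The cleaner route: build a single configuration $x\in S''_A$ of ``block'' form. Partition $\mathbb{Z}$ (the row index) into consecutive blocks of height $n$. Within the $k$-th block, let all $n$ rows carry the same Sturmian word $s_{\alpha,\rho_k}$, where the sequence $(\rho_k)_{k\in\mathbb{Z}}$ is chosen so that along a suitable horizontal window of some width $w$, the $n$ consecutive blocks $\rho_k,\dots,\rho_{k+n-1}$ realize $n$ prescribed length-$w$ factors out of the $w+1$ available — and I arrange this to happen for \emph{all} $n$-tuples of factors simultaneously by letting $(\rho_k)$ run (over a long enough stretch of $k$) through every combination. Concretely, since there are $(w+1)$ choices of factor per block and the blocks are independent, a window of height $n\cdot n$ (i.e.\ $n$ blocks) and width $n$ sees at least $(n+1)^n\geq n^n$ distinct $n\times n$ patterns as the vertical position slides over the stretch where all combinations occur, and these $n^n$ patterns can be taken pairwise disjoint by spacing them out: put the realizations of the $n^n$ combinations in $n^n$ disjoint vertical locations within $x$. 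Then choose $d$ large enough that a $d\times d$ window, placed anywhere, contains all $n^n$ of these disjoint $n\times n$ patterns; this forces $d\geq$ (roughly) $n^n\cdot n$ in the vertical direction and $d\geq n$ horizontally, which is a finite bound, so such a $d$ exists.

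The main obstacle is making rigorous the claim that one can choose the intercepts $(\rho_k)$ so that a \emph{bounded} horizontal window exhibits all $n$-block combinations of length-$n$ factors: one must check that for the fixed irrational $\alpha$, every length-$n$ factor of a Sturmian word of slope $\alpha$ is itself a factor of the single word $s_{\alpha,0}$ (true: Sturmian words of equal slope have the same language), and that by translating the intercept appropriately each factor can be made to appear at a \emph{prescribed} position — which follows from the uniform recurrence (minimality) of Sturmian subshifts, giving a bound on gaps between occurrences. Assembling these occurrences blockwise and bounding all the relevant widths and heights is bookkeeping; the one genuine input is minimality/unique ergodicity of the Sturmian subshift of slope $\alpha$, which supplies the uniform gap bound that keeps $d$ finite. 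Once the lemma is in hand, the non-soficity of $S''_A$ will follow from the standard pattern-counting obstruction: a sofic subshift has at most exponentially-in-$n^2$ many $n\times n$ patterns with a matching ``mixing/gluing'' rigidity that $S''_A$ violates because of the $n^n$ disjoint patterns forced into every $d\times d$ window.
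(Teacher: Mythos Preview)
Your construction has a genuine gap: with blocks of height $n$ in which all rows are identical, an $n\times n$ window can overlap at most two adjacent blocks and therefore displays at most two distinct row-types---say $j$ copies of one length-$n$ factor above $n-j$ copies of another. That yields at most $n(n+1)^2$ distinct $n\times n$ patterns, polynomial in $n$, nowhere near $n^n$. The figure $(n+1)^n$ you obtain is actually the number of distinct $n^2\times n$ windows (one factor choice per block across $n$ consecutive blocks), not the number of distinct $n\times n$ patterns; the sentence ``a window of height $n\cdot n$ \ldots\ sees at least $(n+1)^n$ distinct $n\times n$ patterns'' conflates these two sizes.

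The repair is to let each \emph{single} row carry its own shift of a fixed Sturmian word $w$ of slope $\alpha$, choosing among $n$ shifts that produce $n$ distinct length-$n$ factors at a reference column; then the $n$ rows of an $n\times n$ window are independent and you genuinely get $n^n$ distinct patterns. This is exactly what the paper does: it encodes $p\in\{0,\ldots,n^n-1\}$ in base $n$ and shifts row $k$ of the $p$-th stripe according to the $k$-th digit of $p$. It then stacks the $n^n$ stripes and repeats the whole stack periodically in the vertical direction (a step you also need but omit---otherwise a $d\times d$ window placed far away vertically misses everything), and invokes uniform recurrence of $w$ for the horizontal bound, just as you correctly anticipated.
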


\begin{proof}
  Let $n>0$.
  Let $\alpha\in A\backslash\mathbb{Q}$ and consider a biinfinite Sturmian word $w$ of slope $\alpha$.
  It has $n+1$ different subwords of length $n$.
  Moreover, it is {\em uniformly recurrent}, that is, there is $c>0$ such that any subword of length $b$ of $w$ contains all the subwords of length $n$.
  Consider, {\em e.g.}, the subword of length $b$ at position $0$ and let $0\leq i_0,\ldots,i_n<b$ be such that the subwords of length $k$ at these positions are different.
  We shall construct $x$ by stacking shifted copies of $w$.\\
  
  For $0\leq p<n^n$, let $B_p$ be the stripe of $n$ stacked lines defined as follows: its $k$-th line ($0\leq k<n$) is $w$ shifted by $i_k$ letters to the left, where $i_k$ is the $k$-th digit of the $n$-ary expansion of $p$.
  Let $\mathcal{P}_p$ denotes the $n\times n$ patterns at position $0$ in $B_p$.
  This definition ensures that the $\mathcal{P}_p$'s are different for each $p$. 
  Moreover, $\mathcal{P}_p$ reoccurs in $B_p$ at each position where the subword $u$ of length $b+n$ at position $0$ in $w$ reoccurs (because $w$ is never shifted by more that $b$).
  Since $u$ is itself uniformly recurrent, there is $c>0$ such that $\mathcal{P}_p$ appears in any rectangle of $B_p$ of width $c$ and height $n$ (the height of $B_p$).\\

  Then, we stack the stripes $B_p$'s one above the other to get a large stripe made of $n^{n+1}$ stacked shifted $w$.
  There is disjoint occurrences of each of the $\mathcal{P}_p$'s in any rectangle of $B_p$ of width $c$ and height $n^{n+1}$ (the height of this large stripe).
  Last, we stack periodically this large stripe to get an element $x\in S''_A$.
  There is disjoint occurrences of each of the $\mathcal{P}_p$'s in any rectangle of width $c$ and height $n^{n+1}+n$ (we add the height of a $B_p$ to the height of the large stripe in order to ensure that the rectangle contains all the stacked $B_p$'s even if its basis falls inside a $B_p$).
  With $d:=\max(c,n^{n+1}+n)$ this yields the desired $x\in S''_A$.
\end{proof}

\noindent We are now in a position to prove Proposition~\ref{prop:sturmian_lines_non_sofic}:\\

\begin{proof}
  Assume that $S''_A$ is sofic and let us get a contradiction on its {\em entropy}
  $$
  H''_A:=\limsup_{n\infty}\frac{\ln(\textrm{number of $n\times n$ patterns})}{n^2}.
  $$
  First, there is $c>0$ such that the number of Sturmian words of length $n$ with any slope is bounded by $cn^3$ (see, {\em e.g.}, \cite{Mignosi-1991}).
  The entropy of $S''_A$ is thus zero:
  $$
  H''_A\leq \limsup_{n\infty}\frac{n\ln(cn^3)}{n^2}=0.
  $$
  
  Then, since $S''_A$ is sofic, there is a finite type subshift $S'''_A$ and a factor map $\pi$ such that $S''_A=\pi(S'''_A)$.
  Let $b$ be the size of the alphabet $S'''_A$ is defined on and let $k\geq 0$ be an upper bound on the diameter (for, {\em e.g.}, the infinite norm) of the forbidden patterns which define $S'''_A$.
  Let us call {\em corona} of a $n\times n$ pattern which appears in an element of $S'''_A$ the pattern formed by the letters at distance at most $k$ from this $n\times n$ pattern, and not inside it.
  There is thus at most $b^{(n+2k)^2-n^2}$ different coronas in $S'''_A$.
  Since $n^n=\textrm{e}^{n\ln n}$ grows faster than $b^{(n+2k)^2-n^2}=b^{O(n)}$, this ensures with Lemma~\ref{lem:config_complexe} that there exist $n>0$, $d>0$ and $x\in S''_A$ such that in any $d\times d$ pattern of $x$, at least two disjoint $n\times n$ patterns are surrounded by the same corona in the preimage of $x$ under $\pi$.\\
  
  Now, for $m>0$, consider an $md\times md$ pattern of the preimage of $x$ under $\pi$.
  Decompose it in a $m\times m$ square grid of $d\times d$ patterns.
  Each of these $d\times d$ pattern contains two disjoint $n\times n$ patterns with the same corona but whose image under $\pi$ are different.
  Since they have the same corona and are disjoint, we can swap them and stay in $S'''_A$.
  But since their image under $\pi$ are different, and since we can do this independently on each of the $m^2$ patterns of size $d\times d$, there are at least $2^{m^2}$ different $md\times md$ patterns in $S''_A$.
  This yields a positive lower bound on the entropy:
  $$
  H''_A\geq \limsup_{m\infty}\frac{\ln(2^{m^2})}{(md)^2}=\frac{\ln 2}{d^2}>0.
  $$
  With $H''_A=0$, this yields the desired contradiction.
\end{proof}



\end{document}